\newcommand{\N}{{\mathbb{N}}}
\newcommand{\R}{{\mathbb{R}}}
\newcommand{\Dc}{\mathcal{D}}
\newcommand{\Lc}{\mathcal{L}}
\newcommand{\Rc}{\mathcal{R}}
\newcommand{\Pc}{\mathcal{P}}
\newcommand{\Vc}{\mathcal{V}}
\newcommand{\Wc}{\mathcal{W}}
\DeclareMathOperator{\im}{im}
\DeclareMathOperator{\ext}{ext}
\DeclareMathOperator{\rank}{rank}
\newcommand{\bq}{\bm{\zeta}}
\newcommand{\bp}{\bm{\Gamma}}
\newcommand{\bv}{\bm{\varpi}}
\newcommand{\bF}{\bm{\tau}}
\newcommand{\q}{{r}}
\newcommand{\pos}{\mathrm{pos}}
\newcommand{\pot}{\mathrm{pot}}
\newcommand{\kin}{\mathrm{kin}}
\newcommand{\ddt}{{\textstyle\frac{\rm d}{{\rm d}t}}}
\newcommand{\setdef}[2]{\left\{ \, #1 \,\left\vert\vphantom{#1} \, #2 \, \right.\right\}}
\theoremstyle{thmstyleone}%
\newtheorem{theorem}{Theorem}
\newtheorem{proposition}[theorem]{Proposition}%
\newtheorem{lemma}[theorem]{Lemma}%
\theoremstyle{thmstyletwo}%
\newtheorem{remark}{Remark}%
\theoremstyle{thmstylethree}%
\newtheorem{definition}{Definition}%
\begin{document}

\title[Rigid multibody systems]{Port-Hamiltonian modeling of rigid multibody systems}


\author[1]{\fnm{Thomas} \sur{Berger}}\email{thomas.berger@math.upb.de}

\author[2]{\fnm{Ren\'e} \sur{Hochdahl}}\email{rene-christopher.hochdahl@tuhh.de}

\author*[3]{\fnm{Timo} \sur{Reis}}\email{timo.reis@tu-ilmenau.de}

\author[2]{\fnm{Robert} \sur{Seifried}}\email{robert.seifried@tuhh.de}


\affil[1]{\orgdiv{Institut f\"ur Mathematik}, \orgname{Universit\"at Paderborn}, \orgaddress{\street{Warburger Str. 100}, \city{Paderborn}, \postcode{33098}, \country{Germany}}}

\affil[2]{\orgdiv{Institut f\"ur Mechanik und Meerestechnik}, \orgname{Technische Universit\"at Hamburg}, \orgaddress{\street{Ei\ss endorfer Stra\ss e 42}, \city{Hamburg}, \postcode{21073}, \country{Germany}}}

\affil*[3]{\orgdiv{Institut f\"ur Mathematik}, \orgname{Technische Universit\"at Ilmenau}, \orgaddress{\street{Weimarer Stra\ss e 25}, \city{Ilmenau}, \postcode{98693}, \country{Germany}}}


\abstract{We employ a port-Hamiltonian approach to model nonlinear rigid multibody systems subject to both position and velocity constraints.  
Our formulation accommodates Cartesian and redundant coordinates, respectively, and captures kinematic as well as gyroscopic effects.  
The resulting equations take the form of nonlinear differential-algebraic equations that inherently preserve an energy balance.  
We show that the proposed class is closed under interconnection, and we provide several examples to illustrate the theory.
}

%
%
%
%
%

\keywords{Port-Hamiltonian systems, multibody systems, position and velocity constraints, Dirac structures, Lagrangian submanifolds, resistive relations, differential-algebraic equations} 


\pacs[MSC Classification]{34A09, 37J39, 53D12, 70E55, 93C10}

\maketitle


\section{Introduction}

Port-Hamiltonian system models encompass a broad class of nonlinear physical systems \cite{JvdS14, vdS17} and originate from port-based network modeling of complex dynamical systems in various physical domains, such as mechanical and electrical systems \cite{JvdS14, vdS13}. Modeling with port-Hamiltonian systems has garnered significant attention, and considerable progress has recently been made in port-Hamiltonian modeling of constrained dynamical systems, leading to differential-algebraic equations (DAEs) \cite{vdS10, BMXZ18, MMW18, MvdS18, MvdS20, vdS13, VvdS10a, GeHaRe20}.
In particular, the extension of the port-Hamiltonian framework to implicit energy storage \cite{MvdS18, MvdS20, MS22a, MS23} enables the modeling of a significantly broader range of physical systems with constraints, such as electrical circuits \cite{GeHaReVdS20}.


An inherent feature of port-Hamiltonian systems, in addition to their ability to maintain an energy balance, is the incorporation of a modular principle. This principle is built upon an elegant theory of power-preserving coupling, resulting once more in a port-Hamiltonian system \cite{JvdS14, CvdSB07, BKvdSZ10, SkJaEh23}. Historically, the theory of port-Hamiltonian systems has been developed based on the observation that principles from rational mechanics (see e.g.~\cite{Arn89}) apply to various physical domains. Over time, however, the theory has advanced well beyond its mechanical origins and has been further developed as an independent framework. In this article, we take a step back and analyze rigid multibody systems through the lens of port-Hamiltonian theory. 

Multibody systems~\cite{SchiehlenEberhard14,Woernle24,Shabana20} constitute a modern computer-oriented modeling approach for mechanical systems that undergo large rigid body translational and rotational motion. This approach is widely used in fields such as machine dynamics, vehicle dynamics, robotics or biomechanics. Multibody systems consist of rigid bodies which are connected by joints and bearings as well as coupling elements such as springs and dampers. Due to occurring finite rotational motions, the systems are inherently nonlinear. As a special case, multibody systems also include vibration systems of rigid bodies, which typically undergo only small linear motions. 

The Hamiltonian approach is typically formulated in terms of state variables consisting of position and momentum of each body. However, in the literature on mechanical systems, it is more common to use velocity~-- a so-called co-energy variable~-- instead of momentum. To bridge this gap, we employ the theory presented in~\cite{MvdS20}, which addresses implicit energy storage. This approach also facilitates the incorporation of both position and velocity constraints.

We begin by formulating translational motion in the port-Hamiltonian framework, considering systems of point masses with positions and velocities expressed in Cartesian coordinates. While this approach does not require much conceptual effort, it is limited from a practical perspective: when spatially expanded (in particular rotating) rigid bodies  are involved, their orientation must also be incorporated into the model.
In this case, the nonlinear relationship between angular velocity and the rotational parameters that determine the body's orientation must be taken into account. This introduces an additional layer of complexity compared to purely translational motion, 
which is also explored in this article within the port-Hamiltonian framework.

This article is structured as follows: After establishing basic notations, we provide a concise introduction to port-Hamiltonian systems in Section~\ref{sec:pHsys}. Subsequently, we consider multibody systems within this framework: point masses in Cartesian coordinates are discussed in Section~\ref{sec:rigidmks}. Multibody systems in redundant coordinates are considered in Section~\ref{sec:rigidmks2}. The interconnection of two systems via their ports is introduced in Section~\ref{sec:interconn} and it is shown that the class of port-Hamiltonian systems is closed under interconnection. A selection of illustrative examples are discussed in Section~\ref{Sec:Examples} and Conclusions are given in Section~\ref{Sec:Concl}. Some auxiliary results on Dirac structures and Lagrangian submanifolds are collected in Appendix~\ref{Sec:AppA}.

\subsection{Notation}

Throughout this article, the space $\R^n$ will be identified with matrices of size $n\times 1$. In particular, the Euclidean inner product of $z_1,z_2\in\R^n$ is given by $z_1^\top z_2$.

The vector space framework is often insufficient for describing physical systems, particularly those involving ideal constraints. Instead, the state evolution is represented within a {\em manifold}, a topological space that, at least locally, resembles the structure of a real vector space. This concept is further discussed in \cite{R21} in conjunction with port-Hamiltonian systems. For our purposes, it is adequate to consider the slightly simpler notion of submanifolds of a finite dimensional space $\R^n$.

\begin{definition}[Submanifold of $\R^n$]
Let $\mathcal{M}\subset\R^n$ be a subset. Then
$\mathcal{M}$ is called {\em differentiable submanifold of $\R^n$} (in this article just {\em submanifold of $\R^n$} for sake of brevity), if for all $x\in\mathcal{M}$, there exists a neighborhood $U_x\subset \R^n$ of~$x$, a number $k\in\N$, and a continuously differentiable mapping $f_x:U_x\to\R^k$, such that $f_x'(x)$ is surjective, 
and
    \[\mathcal{M}\cap U_x=\setdef{y\in U_x}{f_x(y)=0}.\]
\end{definition}

For a~submanifold $\mathcal{M}\subset\R^n$ and $x\in\mathcal{M}$, the implicit function theorem implies the existence of a number $k\in\N$, a neighborhood $V_x\subset\R^k$ of zero, and a continuously differentiable mapping $g_x:V_x\to\R^n$ satisfying the conditions $g_x(0)=x$, $g_x(V_x)\subset\mathcal{M}$ and $g_x'(0)$ has a~trivial kernel. Such a mapping $g_x$ possessing these properties is termed a {\em chart of $\mathcal{M}$ at~$x$}. 

Next we introduce the concept of the tangent space. To this end, we utilize  continuously differentiable curves on $\mathcal{M}$, i.e., continuously differentiable mappings $x:(-1,1)\to \R^n$ with $x(t)\in\mathcal{M}$ for all $t\in(-1,1)$. Defining the concept of the tangent space can be quite abstract in a general differential geometric context. However, the situation becomes significantly simpler when dealing with submanifolds of $\R^n$.
Loosely speaking, the tangent space $T_{x_0}\mathcal{M}$ at $x_0\in\mathcal{M}$ consists of all vectors that can be obtained by taking derivatives of smooth curves within $\mathcal{M}$ that pass through~$x_0$. 

\begin{definition}[Tangent space of a submanifold]
Let  $\mathcal{M}\subset\R^n$ be a~submanifold of $\R^n$.
The {\em tangent space} of $\mathcal{M}$ at $x_0\in\mathcal{M}$ is defined by
\[
T_{x_0}\mathcal{M} = \setdef{v \in\Wc}{{\parbox{9cm}{there exists a continuously differen\-tiable curve $x:(-1,1)\to \mathcal{M}$ with $x(0)=x_0$ and $\dot{x}(0)=v$}}}.
\]
The elements of $T_{x_0}\mathcal{M}$ are called {\em tangent vectors of $\mathcal{M}$ at $x_0$}.
\end{definition}
It can be observed from the definition of the tangent space that, for a chart $g_x: V_x \to \R^n$ of $\mathcal{M}$ at~$x$, we obtain $\im g_x'(0) = T_{x}\mathcal{M}$. In particular, $T_{x}\mathcal{M}$ is a~subspace.


\section{Port-Hamiltonian systems}\label{sec:pHsys}

We recall some fundamental concepts in port-Hamiltonian systems from \cite{MvdS18, MvdS20}. An essential concept to grasp is the notion of a \emph{Dirac structure}, which describes the power-preserving energy routing within the system. 


\begin{definition}[Dirac structure]\label{def-Dir}
A subspace $\mathcal D \subset \R^n\times \R^n$ is called a \emph{Dirac structure}, if
for all $f,e\in\R^n$, we have
\begin{equation*}
(f,e)\in \mathcal D\;\Longleftrightarrow\; \forall \, (\hat{f},\hat{e})\in \mathcal D:\; \hat{f}^\top e+f^\top\hat{e}=0.
\end{equation*}
\end{definition}
For $(f,e)\in\Dc$, we call $e$ an {\em effort} and $f$ is termed a \emph{flow}. By equipping $\R^n \times \R^n$ with the indefinite inner product 
\[
\begin{aligned}
\langle\!\langle\cdot,\cdot\rangle\!\rangle\;:&&\big(\R^n\times\R^n\big)\times\big(\R^n\times\R^n\big)&\to\R,\\
&&\big((f_1,e_1),(f_2,e_2)\big)&\mapsto \langle {f}_1,e_2\rangle+\langle f_2,{e}_1\rangle,
\end{aligned}\]
we can conclude that $\Dc\subset\R^n\times\R^n$ is a Dirac structure if, and only if, $\Dc=\Dc^{\bot\!\!\!\bot}$, where the latter denotes the orthogonal complement of $\Dc$ with respect to $\langle\!\langle\cdot,\cdot\rangle\!\rangle$.

Any Dirac structure $\Dc\subset\R^n\times\R^n$ is $n$-dimensional. Furthermore, for matrices $K,L\in\R^{n\times n}$, $\Dc=\im \begin{bmatrix} K & L\end{bmatrix}$ is a Dirac structure if, and only if, $\rank \begin{bmatrix} K & L\end{bmatrix}=n$ and $KL^\top+LK^\top=0$, see~\cite[Prop.~1.1.5]{Cou90}.

Dirac structures are oftentimes insufficent for being a~component in port-Hamiltonian modeling of rigid multibody systems. Therefore, we require the more comprehensive concept of a~{\em modulated Dirac structure} which is, loosely speaking, a~family of Dirac structures depending on a~parameter.

In a~more general setting, a modulated Dirac structure on a manifold
 $\mathcal M$ is defined as a {certain} subbundle of $T\mathcal M\oplus T'\mathcal M$ (representing the sum of the tangent bundle and cotangent bundle of $\mathcal{M}$) \cite[Def.~2.2.1]{Cou90}, as discussed in \cite{JvdS14}. This general manifold setup is not necessary for the class of multibody systems discussed in the present article. 

\begin{definition}[Modulated Dirac structure]\label{def-DirMod}
Let $U\subset\R^k$ be open. A~family $(\mathcal D_x)_{x\in U}$ of subspaces of  $\R^n\times \R^n$ is called a \emph{modulated Dirac structure}, if the following holds for all $x\in U$:
\begin{enumerate}[(a)]
\item\label{def-DirModa} $\mathcal D_x$ is a~Dirac structure.
\item\label{def-DirModb} There exists a~neighborhood $U_x\subset U$ of $x$ and a~family $(T_y)_{y\in U_x}$ of linear and bijective mappings $T_y:\R^n\to \mathcal D_y$, such that, for all $z\in\R^n$, the mapping
    \[y\mapsto T_yz\]
    is continuous from $U_x$ to $\R^n\times\R^n$.
    \end{enumerate}
\end{definition}
The conditions \eqref{def-DirModa} and \eqref{def-DirModb} outlined above imply that a modulated Dirac structure aligns with the definition of a vector bundle, as described in \cite[Chap.~III, \S~1]{Lang1999}, where~\eqref{def-DirModb} is referred to as {\em local trivialization}.



Next, we introduce a relation that describes the energy storage of the system, known as a {\em Lagrangian submanifold}. We note that the general definition of this concept, as found in \cite[p.\ 568]{Lee12}, is not required for the systems considered in this work. Instead of dealing with submanifolds of general manifolds, it suffices to consider submanifolds of $\R^n\times \R^n$. Typically, these manifolds are assumed to be smooth in the sense that the charts are infinitely often differentiable. However, for our purposes, we can relax this assumption and consider less smooth submanifolds.

\begin{definition}[Lagrangian submanifold]\label{def-Lag}
A submanifold $\mathcal L\subset \R^n\times \R^n$ is called \emph{Lagrangian submanifold} of $\R^n\times\R^n$, if for all $z\in \mathcal L$ and ${(v_1,v_2)}\in\R^n\times\R^n$ we have
\begin{equation}\label{lag-cond}
(v_1,v_2)\in T_z \mathcal L\;\Longleftrightarrow\; \forall\, (w_1,w_2)\in T_z \mathcal L:\ v_1^\top w_2- w_1^\top v_2=0.
\end{equation}
\end{definition}

In \cite[Prop.~22.12]{Lee12}, it is shown that for a smooth function $H:U\to\R^n$ defined on a simply connected domain $U\subset\R^n$, the set defined as
\[\mathcal L\coloneqq\setdef{(x,H(x))}{x\in U}\subset\R^n\times\R^n\]
is a Lagrangian submanifold if, and only if, $H$ is a gradient field. That is, $\nabla \mathcal{H}=H$ for some smooth function $\mathcal{H}:U\to\R$. In Appendix~\ref{Sec:AppA}, we present a generalization of this result, which will be used in the context of position constraints in mechanical systems. Additionally, Appendix~\ref{Sec:AppA} includes some results on special sets that form a~modulated Dirac structure. 

Another essential concept for port-Hamiltonian systems is that of a {\em(modulated) resistive relation}, which characterizes the internal energy dissipation within the system. This relation is defined on $\R^n\times \R^n$. The components of $(f_\Rc,e_\Rc)\in \Rc$ are called {\em resistive flows}~$f_\Rc$ and {\em resistive efforts}~$e_\Rc$, resp.\ \cite[Sec.~2.4]{JvdS14}.

\begin{definition}[(Modulated) resistive relation]\label{def-res}
A relation $\mathcal R\subset\R^n\times\R^n$ is called \emph{resistive}, if
\[\forall\, (f_\Rc,e_\Rc)\in\Rc:\  f_\Rc^\top e_\Rc\geq0.\]
Let $U\subset\R^k$ be open. A~family $(\mathcal R_x)_{x\in U}$ is called {\em modulated resistive relation}, if $\mathcal R_x\subset\R^n\times\R^n$ is a~resistive relation for all $x\in U$.
\end{definition}
Having established the definitions of (modulated) Dirac structures, Lagrangian submanifolds, and (modulated) resistive relations, we are now prepared to introduce port-Hamiltonian systems, cf.\ also Fig.~\ref{pH-sys}. Again, we note that this class can be defined in a more general setting involving manifolds \cite{MvdS20, JvdS14}. However, we simplify this to the specific setup required for our class of rigid multibody systems.

\begin{definition}[Port-Hamiltonian system]\label{def-pH}
A \emph{port-Hamiltonian (pH) system} is a~differential inclusion
\begin{equation*}
 \left(\begin{pmatrix}\dot x(t)\\f_\Rc(t)\\f_\Pc(t)\end{pmatrix},\begin{pmatrix}e_\Lc(t)\\e_\Rc(t)\\e_\Pc(t)\end{pmatrix}\right)\in\mathcal D_{x(t)},\quad
 \big(x(t),e_\Lc(t)\big)\in\mathcal L,\quad
  (f_\Rc(t),e_\Rc(t))\in\mathcal R_{x(t)},\label{eq:phinc}
\end{equation*}
where, for $U\subset\R^{n_{\Lc}}$ being open,
 \begin{itemize}
 \item  $\mathcal D=(\mathcal D_x)_{x\in U}$ with $\mathcal D_x\subset (\R^{n_{\Lc}}\times\R^{n_{\Rc}}\times\R^{n_{\Pc}})\times(\R^{n_{\Lc}}\times\R^{n_{\Rc}}\times\mathcal \R^{n_{\Pc}})$ is a~modulated Dirac structure (see Definition~\ref{def-Dir}),
 \item $\mathcal L\subset \mathcal \R^{n_{\Lc}}\times \R^{n_{\Lc}}$ is a~Lagrangian submanifold (see Definition~\ref{def-Lag}), and
  \item $\Rc=(\mathcal \Rc_x)_{x\in U}$ with $\mathcal \Rc_x\subset\mathcal \R^{n_{\Rc}}\times\mathcal \R^{n_{\Rc}}$ is a~modulated resistive relation (see Definition~\ref{def-res}).
\end{itemize}
The elements of $\R^{n_{\Lc}}$,  $\R^{n_{\Rc}}$, $\R^{n_{\Pc}}$ are, accordingly, called the \emph{energy-storing flows/efforts}, \emph{resistive flows/efforts} and  \emph{external flows/efforts}. 
\end{definition}
\begin{remark}
    It should be noted that our definition of port-Hamiltonian systems slightly differs from the one e.g.\ in \cite{JvdS14}, where the negative of $\dot{x}$ enters the Dirac structure, and the resistive relation fulfills $f_\Rc^\top e_\Rc\leq0$ for all $(f_\Rc,e_\Rc)\in \Rc_x$, $x\in U$. By substituting the flow with its negative, it is straightforward to establish a one-to-one correspondence between these two definitions. In practical terms, one approach views the system from the perspective of a consumer, while the other adopts a producer-centric viewpoint.
\end{remark}


\begin{figure}
	\centering
	\includegraphics[width=0.5\textwidth]{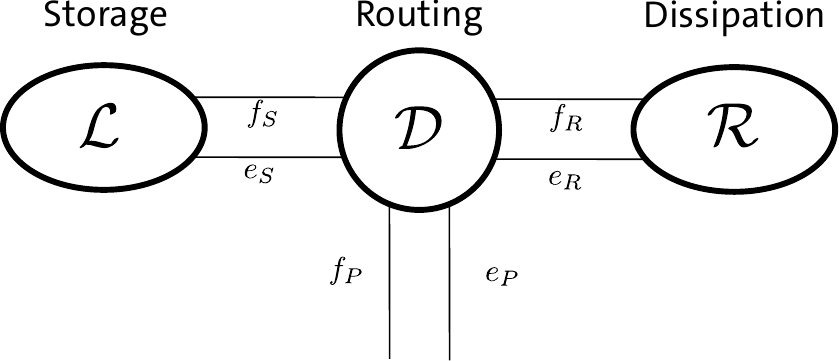}
	\caption{Visual representation of a port-Hamiltonian system.}
	 \label{pH-sys}
\end{figure}

\section{Newtownian mechanical systems: Point masses in Cartesian coordinates}\label{sec:rigidmks}

In this section, we consider mechanical systems consisting of point masses, with their positions expressed in conventional Cartesian coordinates, such as in $\R^2$ or $\R^3$. Loosely speaking, we consider multibody systems in which each rigid body has a negligible spatial expansion and possesses both potential and kinetic energy. The position and velocity of all masses are given by the vector-valued functions $\bm{\q}, \bm{v}: I \to \R^n$, where $I \subset \R$ denotes the considered time interval. The functions $\bm{\q}$ and $\bm{v}$ collectively encode the position coordinates and velocities, respectively, of all point masses in the system. Specifically, for a system with $p$ point masses in three-dimensional space, we have $n = 3p$, meaning each mass contributes three coordinates to the overall position vector.

We first present the modulated Dirac structure for this kind of systems. It contains velocity and force balances, as well as constraints formulated on velocity level. These velocity constraints are described by an equation of the form
\begin{equation*}
    A(\bm{\q}(t))\bm{v}(t)=0,\label{eq:nonhol}
\end{equation*}
where $A$ is a~continuous matrix-valued function with constant rank. Note that all or some of these constraints may be \emph{holonomic}~-- that is, they represent position constraints differentiated with respect to time~-- if they satisfy an integrability condition. Otherwise, they are referred to as \emph{nonholonomic}. 

Let $U_\pos \subset \R^n$ be an open set, representing the admissible positions of the point masses.
We consider the family that is modulated by the positions, namely $(\mathcal{D}_{\bm{\q}})_{{\bm{\q}}\in U_\pos}$ with
\begin{equation}
\mathcal{D}_{\bm{\q}}:=\setdef{ \begin{pmatrix}\bm{v}_{\Lc,f}\\\bm{F}_{\Lc,f}\\\bm{v}_{\Rc}\\\bm{v}_{\ext}\\\bm{F}_{\Lc,e}\\\bm{v}_{\Lc,e}\\\bm{F}_{\Rc}\\\bm{
F}_{\ext}\end{pmatrix}\in\R^{6n+2m}}{\,
{\parbox{7cm}{
$\bm{v}_{\Lc,f}=\bm{v}_{\Lc,e}=\bm{v}_{\Rc},\ A(\bm{\q}) \bm{v}_{\Lc,e}=0$,
\\[2mm] $\bm{v}_{\ext}=B(\bm{\q})^\top \bm{v}_{\Lc,e}$,\\[2mm] $\exists\,\bm{\mu}\in\R^\ell:$\\
$\bm{F}_{\Lc,f}+\bm{F}_{\Lc,e}+\bm{F}_{\Rc}+B(\bm{\q})\bm{F}_{\ext}+A(\bm{\q})^\top \bm{\mu}=0$}}\!\!}.\label{eq:rigidDirac}\end{equation}
Before proving that this constitutes a modulated Dirac structure, let us first discuss the practical meaning of the involved vectors and matrices.
The identical vectors $\bm{v}_{\Lc,f}, \bm{v}_{\Lc,e}, \bm{v}_{\Rc,e} \in \R^n$ represent the velocities of the point masses. One might wonder why three copies of the velocity appear in the Dirac structure. The reason is that velocity is needed in conjunction with potential energy, kinetic energy, and damping.
Furthermore, $\bm{F}_{\Lc,f} \in \R^n$ represents the inertial force, $\bm{F}_{\Rc} \in \R^n$ corresponds to the damping force, and $\bm{F}_{\Lc,e} \in \R^n$ denotes a~restoring force. The vectors $\bm{F}_{\ext}, \bm{v}_{\ext} \in \R^m$ collect the forces and velocities at the external ports, respectively.
Here, the matrix $B: U_{\pos} \to \R^{n \times m}$ describes the direction of the external forces. Moreover, as mentioned above, $A: U_{\pos} \to \R^{\ell \times n}$ encodes the velocity constraints. The term $A(\bm{\q})^\top \bm{\mu}$ represents the forces that ensure the fulfillment of the velocity constraints.

The following result states that $(\mathcal{D}_{\bm{\q}})_{{\bm{\q}}\in U_\pos}$ is a modulated Dirac structure. We do not present a proof at this point, but instead refer to Proposition~\ref{prop:dirac2}, which establishes a more general statement that encompasses the one below; we emphasize that Proposition~\ref{prop:dirac2} does not rely on any results from this section.

\begin{proposition}\label{prop:dirac1}
Let $U_{\pos}\subset\R^n$ be open, let $A:U_{\pos}\to\R^{\ell\times n}$,
$B:U_{\pos}\to\R^{n\times m}$ be continuous, and assume that $A$ has constant rank on $U_\pos$. Then the family $(\mathcal{D}_{\bm{\q}})_{\bm{\q}\in U_{\pos}}$ with $\mathcal{D}_{\bm{\q}}$ as in~\eqref{eq:rigidDirac} is a~modulated Dirac structure.
\end{proposition}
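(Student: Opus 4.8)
The plan is to verify the two defining properties of a modulated Dirac structure from Definition~\ref{def-DirMod} separately, and it is worth noting at the outset that the pointwise Dirac property \eqref{def-DirModa} will not actually require the constant-rank hypothesis; that hypothesis enters only in the local trivialization \eqref{def-DirModb}.

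For \eqref{def-DirModa}, fix $\bm{\q}\in U_\pos$ and read off the flow/effort splitting dictated by \eqref{eq:rigidDirac}: the flows are $f=(\bm{v}_{\Lc,f},\bm{F}_{\Lc,f},\bm{v}_\Rc,\bm{v}_\ext)$ and the efforts are $e=(\bm{F}_{\Lc,e},\bm{v}_{\Lc,e},\bm{F}_\Rc,\bm{F}_\ext)$, so that $\mathcal{D}_{\bm{\q}}\subset\R^N\times\R^N$ with $N=3n+m$ and the indefinite form $\langle\!\langle\cdot,\cdot\rangle\!\rangle$ pairs $\bm{v}_{\Lc,f}$ with $\bm{F}_{\Lc,e}$, $\bm{F}_{\Lc,f}$ with $\bm{v}_{\Lc,e}$, $\bm{v}_\Rc$ with $\bm{F}_\Rc$, and $\bm{v}_\ext$ with $\bm{F}_\ext$. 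I would then use the characterization $\mathcal{D}_{\bm{\q}}=\mathcal{D}_{\bm{\q}}^{\bot\!\!\!\bot}$ recalled after Definition~\ref{def-Dir}, which by nondegeneracy of $\langle\!\langle\cdot,\cdot\rangle\!\rangle$ reduces to two checks: isotropy $\mathcal{D}_{\bm{\q}}\subseteq\mathcal{D}_{\bm{\q}}^{\bot\!\!\!\bot}$ and the dimension count $\dim\mathcal{D}_{\bm{\q}}=N$.

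Isotropy is a direct computation: given two elements of $\mathcal{D}_{\bm{\q}}$, abbreviate the common velocity of the first by $\bm{v}:=\bm{v}_{\Lc,f}=\bm{v}_{\Lc,e}=\bm{v}_\Rc$ and that of the second by $\hat{\bm{v}}$, substitute $\bm{v}_\ext=B(\bm{\q})^\top\bm{v}$ into $f^\top\hat e+\hat f^\top e$, and eliminate $\bm{F}_{\Lc,e}+\bm{F}_\Rc+B(\bm{\q})\bm{F}_\ext$ using the force balance in the form $\bm{F}_{\Lc,e}+\bm{F}_\Rc+B(\bm{\q})\bm{F}_\ext=-\bm{F}_{\Lc,f}-A(\bm{\q})^\top\bm{\mu}$. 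The multiplier terms contribute $\bm{v}^\top A(\bm{\q})^\top\bm{\mu}=(A(\bm{\q})\bm{v})^\top\bm{\mu}=0$ by the velocity constraint, and the remaining terms cancel pairwise, giving $\langle\!\langle\cdot,\cdot\rangle\!\rangle=0$. For the dimension, I would parametrize: the constrained velocity $\bm{v}$ ranges over $\ker A(\bm{\q})$, of dimension $n-r$ with $r:=\rank A(\bm{\q})$, and determines $\bm{v}_{\Lc,f},\bm{v}_{\Lc,e},\bm{v}_\Rc,\bm{v}_\ext$; the force block $(\bm{F}_{\Lc,f},\bm{F}_{\Lc,e},\bm{F}_\Rc,\bm{F}_\ext)$ is constrained only by the requirement that $\bm{F}_{\Lc,f}+\bm{F}_{\Lc,e}+\bm{F}_\Rc+B(\bm{\q})\bm{F}_\ext$ lie in $\im A(\bm{\q})^\top$ (the condition that a suitable $\bm{\mu}$ exists), which cuts out $n-r$ dimensions from $\R^{3n+m}$. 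Adding $(n-r)+(2n+m+r)=3n+m=N$ shows $\dim\mathcal{D}_{\bm{\q}}=N$, and since the $r$-dependence cancels this already confirms \eqref{def-DirModa} pointwise for every $\bm{\q}$, regardless of constancy of the rank.

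For the local trivialization \eqref{def-DirModb} I would turn the parametrization above into an explicit continuous family of bijections $T_{\bm{\q}}:\R^N\to\mathcal{D}_{\bm{\q}}$. The essential ingredient, and the only place the constant-rank hypothesis is used, is the existence of continuous local bases for $\ker A(\bm{\q})$ and for $\im A(\bm{\q})^\top$ near a fixed $\bm{\q}_0$: choosing row indices $I$ and column indices $J$ of size $r$ for which the submatrix $A_{IJ}(\bm{\q}_0)$ is invertible, this submatrix stays invertible nearby, the rows indexed by $I$ then form a basis of the row space throughout (as $\rank A\equiv r$), and one obtains continuous full-rank matrices $N(\bm{\q})\in\R^{n\times(n-r)}$ with $\im N(\bm{\q})=\ker A(\bm{\q})$ together with $A_{I,\cdot}(\bm{\q})^\top\in\R^{n\times r}$ spanning $\im A(\bm{\q})^\top$. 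Using these I would define $T_{\bm{\q}}$ on parameters $(\bm{w},\bm{F}_{\Lc,e},\bm{F}_\Rc,\bm{F}_\ext,\tilde{\bm{\mu}})\in\R^{n-r}\times\R^n\times\R^n\times\R^m\times\R^r$ by setting all velocities equal to $N(\bm{\q})\bm{w}$, putting $\bm{v}_\ext=B(\bm{\q})^\top N(\bm{\q})\bm{w}$, taking $\bm{F}_{\Lc,e},\bm{F}_\Rc,\bm{F}_\ext$ as free coordinates, and $\bm{F}_{\Lc,f}=-\bm{F}_{\Lc,e}-\bm{F}_\Rc-B(\bm{\q})\bm{F}_\ext-A_{I,\cdot}(\bm{\q})^\top\tilde{\bm{\mu}}$. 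This map is continuous in $\bm{\q}$, lands in $\mathcal{D}_{\bm{\q}}$ by construction, and is bijective: injectivity follows from full column rank of $N(\bm{\q})$ and of $A_{I,\cdot}(\bm{\q})^\top$, and surjectivity from $\ker A(\bm{\q})=\im N(\bm{\q})$ and $\im A(\bm{\q})^\top=\im A_{I,\cdot}(\bm{\q})^\top$ together with the force balance. The main obstacle is precisely this last step—organizing the existentially quantified, and generally non-injectively parametrized, multiplier $\bm{\mu}$ into a genuinely bijective and continuously varying trivialization—which is exactly what the passage to the reduced multiplier $\tilde{\bm{\mu}}\in\R^r$ via the continuous basis $A_{I,\cdot}(\bm{\q})^\top$ of $\im A(\bm{\q})^\top$ resolves.
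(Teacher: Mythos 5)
Your proof is correct, but it takes a genuinely different route from the paper's. The paper does not prove Proposition~\ref{prop:dirac1} directly: it obtains it as the special case $n_\kin=n_\pot$, $Z\equiv I_{n_\kin}$, $G\equiv 0$ of Proposition~\ref{prop:dirac2}, whose proof first treats the unconstrained case $\ell=0$ via an image representation $\im\left[\begin{smallmatrix}J(\bq,\bp)\\ I_{\hat n}\end{smallmatrix}\right]$ with $J$ pointwise skew-symmetric, and then adjoins the velocity constraint by invoking the abstract Proposition~\ref{prop:Dirnonhol} from the appendix, which in turn rests on the orthogonal splitting $\R^n=\im E(x)^\top\oplus\ker E(x)$ and on repeated applications of Lemma~\ref{lem:contker} to produce continuous kernel representations. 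You instead argue directly: isotropy by the cancellation computation (with the multiplier term annihilated by the velocity constraint), maximality by the dimension count $(n-r)+(2n+m+r)=3n+m$, and the local trivialization by an explicit parametrization built from continuous local bases of $\ker A(\bm{\q})$ and of $\im A(\bm{\q})^\top$ --- the latter construction being your hand-rolled version of Lemma~\ref{lem:contker}, and your reduction of the existentially quantified multiplier to $\tilde{\bm{\mu}}\in\R^{r}$ is exactly the point that makes the trivialization bijective. Your observation that the constant-rank hypothesis is needed only for property~\eqref{def-DirModb} and not for the pointwise Dirac property is consistent with the paper's Remark~\ref{rem:Dirnonhol}. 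What the paper's detour buys is generality and reusability: the same machinery covers the kinematic map $Z$ and the gyroscopic term $G$ of Proposition~\ref{prop:dirac2}, and Proposition~\ref{prop:Dirnonhol} applies to any modulated Dirac structure subjected to constraints of the form $E(x)e=0$. What your argument buys is a shorter, self-contained and fully explicit proof of the special case, at the (minor) cost of invoking the standard fact that an isotropic subspace of dimension $N$ in $\big(\R^N\times\R^N,\langle\!\langle\cdot,\cdot\rangle\!\rangle\big)$ is automatically equal to its own orthogonal complement, which the paper records only in the converse direction (that every Dirac structure is $N$-dimensional); you should make that one-line nondegeneracy argument explicit if you write this up.
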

The damping elements are modeled using a modulated resistive structure. In the most general case, damping is described by a dissipative relation between the damping force and velocity. A typical application scenario is the consideration of  kinetic friction (see \cite{AH94,LN04}).
A dependence of this relation on the position vector $\bm{\q}$ may arise in non-homogeneous setups. For example, this can occur when a car moves on a road with varying surface properties.

Let us assume a damping model that is slightly simpler than the relational setup described above: 
Specifically, for $U_{\rm d} \subset U_{\pos} \times \R^n$, we consider a function $\bm{F}_{\rm d}: U_{\rm d} \to \R^{n}$ with
\[\forall\,(\bm{\q},\bm{v})\in U_{\rm d}:\quad \bm{v}^\top \bm{F}_{\rm d}(\bm{\q},\bm{v})\geq0.\]
Then it follows immediately that $\Rc = (\mathcal{R}_{\bm{\q}})_{\bm{\q}\in U_{\pos}}$ with
\begin{equation}
\mathcal{R}_{\bm{\q}}:=\setdef{\begin{pmatrix}\bm{F}\\\bm{v}\end{pmatrix}\in\R^{2{n}}}{\bm{F}=\bm{F}_{\rm d}(\bm{\q},\bm{v})}\label{eq:MKSresrel}
\end{equation}
is a~modulated resistive relation.\\
The remaining key component is the Lagrangian submanifold, which is, in the context of mechanical systems, responsible for storage of kinetic and potential energy. In particular, it encodes the restoring and inertial forces. 
Let $M \in \R^{n \times n}$ be a symmetric and positive definite matrix (representing the mass matrix),  
$\mathcal{V}_{\pot} : U_{\pos} \to \R$ a twice continuously differentiable mapping (representing the potential energy),  
and $c : U_{\pos} \to \R^k$ a twice continuously differentiable function (representing the holonomic constraints on the position $\q$),  
which vanishes on a non-empty subset of $U_{\pos}$ and whose Jacobian satisfies $\rank c'(\bm{\q}) = k$ for all $\bm{\q} \in U_{\pos}$.  
We then define
\begin{equation}
    \mathcal{L}:=\setdef{\!\begin{pmatrix}\bm{\q}\\\bm{p}\\\bm{F}\\\bm{v}\end{pmatrix}\in U_{\pos}\times \R^{3n}}{\;\parbox{55mm}{$M\bm{v}=\bm{p},\,c(\bm{\q})=0,\,$\\[2mm] $\exists\,\bm{\lambda}\in\R^k:\
    \bm{F}=\nabla \mathcal{V}_{\pot}(\bm{\q})+c'(\bm{\q})^\top\bm{\lambda}\!\!$}}.\label{eq:LagrMech0}
\end{equation}
Here, $\bm{p}$ stands for the momenta and $\nabla\Vc_{\pot}(\bm{\q})$ is the restoring force caused by potential energy storage. 
The term $c'(\bm{\q})^\top \bm{\lambda}$ represents the forces that ensure the fulfillment of the position constraints. 

We can conclude from Proposition~\ref{prop-gradient} in Appendix~\ref{Sec:AppA} (by setting $\mathcal{H}(\bm{\q},\bm{p})=\tfrac12\bm{p}^\top M^{-1}\bm{p}+\mathcal{V}_{\pot}(\bm{\q})$ and $d(\bm{\q},\bm{p})=c(\bm{\q})$) that $\Lc$ as in \eqref{eq:LagrMech0} is a~Lagrangian submanifold. 
Indeed, it is even a~Lagrangian submanifold, if $M$ is only positive semi-definite. This follows by an application of 
Proposition~\ref{prop-gradient} with $\mathcal{H}(\bm{\q},\bm{p})=\tfrac12\bm{p}^\top M^{+}\bm{p}+\mathcal{V}_{\pot}(\bm{\q})$, where $M^+\in\R^{n\times n}$ is the Moore-Penrose-inverse of $M$, and
\[d(\bm{\q},\bm{p})=\begin{pmatrix}
    c(\bm{\q})\\W\bm{p}
\end{pmatrix},\]
where, for $r=\rank M$, $W\in\R^{(n-r)\times n}$ is a~matrix with $\im W^\top =\ker M$.

The overall port-Hamiltonian model according to Definition~\ref{def-pH} is, for $(\mathcal{D}_{\bm{\q}})_{{\bm{\q}}\in U_\pos}$ as in \eqref{eq:rigidDirac}, $\Lc$ as in \eqref{eq:LagrMech0}, and $(\mathcal{R}_{\bm{\q}})_{{\bm{\q}}\in U_\pos}$ as in \eqref{eq:MKSresrel}, given by
\begin{align*}
\begin{pmatrix}\dot{\bm{\q}}(t)\\\dot{\bm{p}}(t)\\\bm{v}_{\Rc}(t)\\\bm{v}_{\ext}(t)\\\bm{F}(t)\\\bm{v}(t)\\\bm{F}_{\Rc}(t)\\\bm{F}_{\ext}(t)\end{pmatrix}&\in\mathcal{D}_{\bm{\q}(t)},\quad \begin{pmatrix}\bm{F}_{\Rc}(t)\\\bm{v}_{\Rc}(t)\end{pmatrix}\in\mathcal{R}_{\bm{\q}(t)},\quad
\begin{pmatrix}\bm{\q}(t)\\\bm{p}(t)\\\bm{F}(t)\\\bm{v}(t)\end{pmatrix}\in\mathcal{L}.
\end{align*}
The first relation gives
\begin{equation}
\begin{aligned}
\dot{\bm{\q}}(t)&=\bm{v}(t),\\
\dot{\bm{p}}(t)&=-\bm{F}(t)-\bm{F}_{\Rc}(t)-B(\bm{\q}(t))\bm{F}_{\ext}(t)-A(\bm{\q}(t))^\top \bm{\mu}(t),\\
0&=A(\bm{\q}(t))\dot{\bm{\q}}(t),\\
\bm{v}(t)&=\bm{v}_{\Rc}(t),\\
\bm{v}_{\ext}(t)&=B(\bm{\q}(t))^\top {\bm{v}}(t).
\end{aligned}\label{eq:mksDiracdyn}
\end{equation}
Now incorporating the Lagrangian submanifold and the modulated resistive relation, we have
\begin{equation*}
\begin{aligned}
M\bm{v}(t)&=\bm{p}(t),\\
\bm{F}(t)&=\nabla \mathcal{V}_\pot(\bm{\q}(t))+c'(\bm{\q}(t))^\top\bm{\lambda}(t),\\
0&=c(\bm{\q}(t)),\\
\bm{F}_{\Rc}(t)&=\bm{F}_{\rm d}(\bm{\q}(t),{\bm{v}_{\Rc}(t)}),
\end{aligned}
\end{equation*}
and inserting this into \eqref{eq:mksDiracdyn}, we obtain the overall model
\begin{equation}
\begin{aligned}
\dot{\bm{\q}}(t)&=\bm{v}(t),\\
\ddt M\bm{v}(t)&=-\nabla \mathcal{V}_\pot(\bm{\q}(t))-\bm{F}_{\rm d}(\bm{\q}(t),{ \bm{v}}(t))\\
&\quad\, -c'(\bm{\q}(t))^\top\bm{\lambda}(t)-A(\bm{\q}(t))^\top \bm{\mu}(t)-B(\bm{\q}(t))\bm{F}_{\ext}(t),\\
0&=c(\bm{\q}(t)),\\
0&=A(\bm{\q}(t)){\bm{v}}(t),\\
\bm{v}_{\ext}(t)&=B(\bm{\q}(t))^\top {\bm{v}}(t).
\end{aligned}\label{eq:mksdyn}
\end{equation}
We note that the total energy
\[\mathcal{H}(\bm{\q},\bm{v})=\tfrac12 \bm{v}^\top M\bm{v}+\Vc_\pot(\bm{\q})\]
fulfills, along the solutions of \eqref{eq:mksdyn}, 
\[\ddt \mathcal{H}(\bm{\q}(t),\bm{v}(t))=
-\bm{v}_{\ext}(t)^\top \bm{F}_{\ext}(t)-{\bm{v}}(t)^\top\bm{F}_{\rm d}(\bm{\q}(t),{ \bm{v}}(t))\leq -\bm{v}_{\ext}(t)^\top \bm{F}_{\ext}(t).
\]
That is, $\bm{v}_{\ext}(t)^\top \bm{F}_{\ext}(t)$ can be regarded as the power extracted from the system, whereas ${\bm{v}}(t)^\top\bm{F}_{\rm d}(\bm{\q}(t),{ \bm{v}}(t))$ is the dissipated power. 

\section{Multibody systems: Redundant coordinates}\label{sec:rigidmks2}

In this section, we consider a much more general class of multibody systems. In these general multibody systems, rigid bodies are considered. In contrast to point masses, a complete description of a rigid body includes not only its position but also its orientation, with the center of gravity often chosen as the reference point for the position. Accordingly, the Cartesian coordinates (three per body in space) must be extended by rotational parameters (at least three per body in space).

Different parameterizations such as unit-quaternions or Euler angles are possible within the port-Hamiltonian framework and have different advantages \cite{FuTaMa15}. In the following, we prefer to use Euler angles, as they require only three parameters and their associated singularity in spatial representation coincides with physical limitations of the systems discussed in Section~\ref{Sec:Examples}. Thus, six spatial coordinates are necessary for each rigid body (three in planar motion). It should be noted that these coordinates may depend on each other if {position} constraints are present. Since there are more coordinates available than necessary to describe the kinematics uniquely, these coordinates are called \emph{redundant coordinates}. In contrast, generalized coordinates are the minimal number (corresponding to the degree of freedom) of coordinates to uniquely describe the kinematics~\cite{SchiehlenEberhard14}. Generalized coordinates are used e.g.\ in the Newton-Euler formalism or the Lagrangian equation of 2nd order, yielding the equations of motion as an ordinary differential equation. In contrast, the use of redundant coordinates yield the equation of motion as a differential algebraic equation. Both approaches have case-dependent advantages and disadvantages.     

In the following, redundant coordinates are used to describe rigid multibody systems as port-Hamiltonian systems. With this approach, the kinematic relations between angular velocity and rotational parameters and the appearance of gyroscopic forces must be considered. {The latter appear when the translational and angular velocities are defined within a body-fixed coordinate frame. This approach is used to maintain a constant mass matrix.} Although this framework is more general than the one in the previous section and encompasses far more multibody systems than the previous approach, the class introduced in the following does not claim to cover all multibody systems with rigid components.

Subsequently, we use the following notation:

\begin{center}  \ \\\begin{tabular}{ll}
        $\bq$: & global positions, redundant coordinates \\ 
        $\bp$: & global momenta, \\ 
        $\bv$: & global velocities, velocity coordinates,  \\ 
        $\bF$: & global forces.\\[1mm]
    \end{tabular}

~\\
\end{center}
Note that there is no universally agreed-upon standard notation for this in the literature. The global positions, i.e. the redundant coordinates of all bodies, collect all positions (as discussed in the previous section) and orientation parameters of all rigid bodies. Consequently, $\bp$ may encompass both linear and angular momenta of all bodies, $\bv$ may consist of both linear and angular velocities of all bodies, and $\bF$ may include both forces and torques. The collection of the quantities of all bodies is reflected by the term {\em global}.

Again, we start with a function $\mathcal{V}_\pot: U_\pos \to \R$ representing the potential energy, where $U_\pos\subset \R^{n_\pot}$ is open. It is reasonable to assume that, in general, the number $n_\pot$ of variables determining the potential energy may differ from the number $n_\kin$ of variables governing the kinetic energy. This is evident in spring-mass-damper systems~\cite{Smith2002}, where each spring serves as a potential energy storage element, while each mass acts as a kinetic energy storage element. The kinetic energy is given by the term $\tfrac{1}{2}\bv^\top M \bv$, again with $M\in \R^{n_\kin \times n_\kin}$ being symmetric. This matrix incorporates inertia effects of all bodies, which is why it is referred to as the {\em global mass matrix}. For instance, it includes masses but may also contain inertia tensors.
As in the case of systems in Cartesian coordinates, assuming positive semi-definiteness is reasonable from a practical perspective, although it does not mathematically contribute to the findings of this article. Note that positive semi-definiteness becomes relevant when studying the existence of global solutions, which is not addressed here.

For now, we set aside considerations of energy and focus on the structural relationships between the involved global velocities  and global forces. These relationships are governed by a Dirac structure, which generalizes the one in \eqref{eq:rigidDirac} in several ways:
First, a kinematic relation between the velocities associated with kinetic and potential energies is incorporated. This is represented by the position-dependent matrix $Z: U_{\pos} \to \R^{n_\pot \times n_\kin}$.
Additionally, gyroscopic forces are introduced, which are captured by the pointwise skew-symmetric matrix $G: \R^{n_\kin}\to\R^{n_\kin \times n_{\kin}}$, whose argument consists of the globale momenta. We introduce the family $(\mathcal{D}_{(\bq,\bp)})_{{(\bq,\bp)}\in U_\pos\times\R^{n_\kin}}$, modulated by the global positions and  momenta, as
\begin{equation}
\mathcal{D}_{(\bq,\bp)}:=\setdef{ \begin{pmatrix}\bv_{\Lc,f}\\\bF_{\Lc,f}\\\bv_{\Rc}\\\bv_{\ext}\\\bF_{\Lc,e}\\\bv_{\Lc,e}\\\bF_{\Rc}\\\bF_{\ext}\end{pmatrix}\in\R^{2n_\pot+4n_\kin+2m}}{\,
{\parbox{5cm}{
$\bv_{\Lc,f}=Z(\bq)\bv_{\Lc,e},\,\bv_{\Rc}=\bv_{\Lc,e},$\\[2mm] $A(\bq) \bv_{\Lc,e}=0$,
\\[2mm] $\bv_{\ext}=B(\bq)^\top \bv_{\Lc,e}$,\\[2mm] $\exists\,\bm{\mu}\in\R^\ell:$\\
$\bF_{\Lc,f}+Z(\bq)^\top\bF_{\Lc,e}+G(\bp)\bv_{\Lc,e}$\\$+\bF_{\Rc}+B(\bq)\bF_{\ext}+A(\bq)^\top \bm{\mu}=0$}}\!\!}.\label{eq:rigidDirac2}\end{equation}

Next we show that this family is a modulated Dirac structure.

\begin{proposition}\label{prop:dirac2}
Let $U_{\pos}\subset\R^{n_\pot}$ be open, let
$A:U_{\pos}\to\R^{\ell\times n_{\kin}}$, $B:U_{\pos}\to\R^{n_\kin\times m}$, $G: \R^{n_\kin}\to\R^{n_\kin \times n_{\kin}}$, $Z: U_{\pos} \to \R^{n_\pot \times n_\kin}$ be continuous, and assume that $A$ has constant rank on $U_\pos$, and $G(\bp)$ is skew-symmetric for all $\bp\in\R^{n_\kin}$. Then the family $(\mathcal{D}_{(\bq,\bp)})_{{(\bq,\bp)}\in U_\pos\times\R^{n_\kin}}$ with $\mathcal{D}_{(\bq,\bp)}$ as in~\eqref{eq:rigidDirac2} is a~modulated Dirac structure.
\end{proposition}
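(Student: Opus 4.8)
The plan is to verify the two requirements of Definition~\ref{def-DirMod} for $\Dc_{(\bq,\bp)}$ through a single construction. Writing $N:=n_\pot+2n_\kin+m$, the ambient space $\R^{2n_\pot+4n_\kin+2m}$ becomes $\R^N\times\R^N$ once we declare the flow and effort blocks to be
\[
f=(\bv_{\Lc,f},\bF_{\Lc,f},\bv_{\Rc},\bv_{\ext}),\qquad e=(\bF_{\Lc,e},\bv_{\Lc,e},\bF_{\Rc},\bF_{\ext}),
\]
so that $f^\top e=\bv_{\Lc,f}^\top\bF_{\Lc,e}+\bF_{\Lc,f}^\top\bv_{\Lc,e}+\bv_{\Rc}^\top\bF_{\Rc}+\bv_{\ext}^\top\bF_{\ext}$ pairs each velocity with a force. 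I would first show that $\Dc_{(\bq,\bp)}$ is isotropic with respect to $\langle\!\langle\cdot,\cdot\rangle\!\rangle$, then exhibit an explicit $N$-dimensional parametrization; together these give part~\ref{def-DirModa}, while the parametrization simultaneously supplies the local trivialization of part~\ref{def-DirModb}.

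For isotropy, take two elements of $\Dc_{(\bq,\bp)}$, distinguished by hats, with associated multipliers $\bm{\mu},\hat{\bm{\mu}}\in\R^\ell$. Substituting the defining relations $\bv_{\Lc,f}=Z(\bq)\bv_{\Lc,e}$, $\bv_{\Rc}=\bv_{\Lc,e}$, $\bv_{\ext}=B(\bq)^\top\bv_{\Lc,e}$ together with $\bF_{\Lc,f}=-Z(\bq)^\top\bF_{\Lc,e}-G(\bp)\bv_{\Lc,e}-\bF_{\Rc}-B(\bq)\bF_{\ext}-A(\bq)^\top\bm{\mu}$ into $f^\top\hat e+\hat f^\top e$, I expect every term to cancel: the $Z$- and $B$-contributions cancel because a scalar equals its transpose, the two $G$-contributions cancel by skew-symmetry $G(\bp)^\top=-G(\bp)$, and the multiplier terms $\bm{\mu}^\top A(\bq)\hat\bv_{\Lc,e}$ and $\hat{\bm{\mu}}^\top A(\bq)\bv_{\Lc,e}$ vanish because the constraint $A(\bq)\bv_{\Lc,e}=0$ forces $\bv_{\Lc,e},\hat\bv_{\Lc,e}\in\ker A(\bq)$. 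Hence $\Dc_{(\bq,\bp)}\subseteq\Dc_{(\bq,\bp)}^{\bot\!\!\!\bot}$, and since $\langle\!\langle\cdot,\cdot\rangle\!\rangle$ is nondegenerate this already yields $\dim\Dc_{(\bq,\bp)}\le N$.

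To obtain the reverse inequality and the trivialization at once, set $r:=\rank A(\bq)$ (constant in $\bq$) and let $S(\bq)\in\R^{n_\kin\times(n_\kin-r)}$ and $P(\bq)\in\R^{n_\kin\times r}$ have columns forming bases of $\ker A(\bq)$ and $\im A(\bq)^\top$, respectively. Define $T_{(\bq,\bp)}:\R^{N}\to\R^{2N}$ on parameters $(\alpha,\bF_{\Lc,e},\bF_{\Rc},\bF_{\ext},\gamma)\in\R^{n_\kin-r}\times\R^{n_\pot}\times\R^{n_\kin}\times\R^m\times\R^{r}$ by
\[
T_{(\bq,\bp)}\!\begin{pmatrix}\alpha\\\bF_{\Lc,e}\\\bF_{\Rc}\\\bF_{\ext}\\\gamma\end{pmatrix}
=\begin{pmatrix}
Z(\bq)S(\bq)\alpha\\
-Z(\bq)^\top\bF_{\Lc,e}-G(\bp)S(\bq)\alpha-\bF_{\Rc}-B(\bq)\bF_{\ext}-P(\bq)\gamma\\
S(\bq)\alpha\\
B(\bq)^\top S(\bq)\alpha\\
\bF_{\Lc,e}\\
S(\bq)\alpha\\
\bF_{\Rc}\\
\bF_{\ext}
\end{pmatrix}.
\]
By construction every image point satisfies all defining relations of $\Dc_{(\bq,\bp)}$, with $\bm{\mu}$ chosen so that $A(\bq)^\top\bm{\mu}=P(\bq)\gamma$ (possible since $\im P(\bq)=\im A(\bq)^\top$), so $\im T_{(\bq,\bp)}\subseteq\Dc_{(\bq,\bp)}$. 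Moreover $T_{(\bq,\bp)}$ is injective: the blocks $\bv_{\Lc,e},\bF_{\Lc,e},\bF_{\Rc},\bF_{\ext}$ appear literally in the output, whence $\alpha$ is recovered from the full column rank of $S(\bq)$ and, the remaining terms of $\bF_{\Lc,f}$ being then known, $\gamma$ from the full column rank of $P(\bq)$. As the domain has dimension $N$, this gives $\dim\Dc_{(\bq,\bp)}\ge N$, hence $\dim\Dc_{(\bq,\bp)}=N$ and $\Dc_{(\bq,\bp)}=\im T_{(\bq,\bp)}$. Being isotropic with $\dim\Dc_{(\bq,\bp)}^{\bot\!\!\!\bot}=2N-N=N=\dim\Dc_{(\bq,\bp)}$, it coincides with its orthogonal companion and is therefore a Dirac structure, settling~\ref{def-DirModa}. (Equivalently one could read off an image representation of $\Dc_{(\bq,\bp)}$ and invoke the criterion $KL^\top+LK^\top=0$ cited after Definition~\ref{def-Dir}.)

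The one genuinely delicate point, and the expected main obstacle, is~\ref{def-DirModb}: producing $S(\bq)$ and $P(\bq)$ that depend \emph{continuously} on $\bq$ near a fixed $\bq_0$, which is exactly where the constant-rank hypothesis on $A$ is indispensable. Since $\rank A(\bq)\equiv r$, I would fix an invertible $r\times r$ submatrix of $A(\bq_0)$; by continuity it remains invertible on a neighborhood $U_{\bq_0}$, and Cramer's rule then furnishes continuous local bases $S(\bq)$ of $\ker A(\bq)$ and $P(\bq)$ of $\im A(\bq)^\top$ on $U_{\bq_0}$ (equivalently, the projector $I-A(\bq)^+A(\bq)$ onto $\ker A(\bq)$ is continuous under locally constant rank). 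With $Z$, $B$, $G$ continuous by hypothesis, each block of $T_{(\bq,\bp)}z$ is then continuous in $(\bq,\bp)$ for every fixed $z\in\R^N$, so $(T_{(\bq,\bp)})_{(\bq,\bp)\in U_{\bq_0}\times\R^{n_\kin}}$ is the required local trivialization. This establishes~\ref{def-DirModb} and completes the proof.
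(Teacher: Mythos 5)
Your proof is correct, but it follows a genuinely different route from the paper's. The paper argues in two stages: it first treats the unconstrained case $\ell=0$, where $\hat{\mathcal D}_{(\bq,\bp)}$ admits the image representation $\im\left[\begin{smallmatrix}J(\bq,\bp)\\ I_{\hat n}\end{smallmatrix}\right]$ with $J$ pointwise skew-symmetric and continuous (this yields the Dirac property and the local trivialization simultaneously), and then appends the velocity constraint $A(\bq)\bv_{\Lc,e}=0$ by invoking the general Proposition~\ref{prop:Dirnonhol} from Appendix~\ref{Sec:AppA}, verifying its hypothesis that $\dim\big(\hat{\mathcal D}_{(\bq,\bp)}\cap(\R^{\hat n}\times\ker E(\bq,\bp))\big)$ is constant from the constant rank of $A$. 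You instead handle the constrained structure in one pass: you check maximal isotropy directly (the same cancellations via skew-symmetry of $G$ and the kernel condition on $\bv_{\Lc,e}$ that drive the paper's Step~1 and the isotropy half of Proposition~\ref{prop:Dirnonhol}), and you build an explicit injective parametrization $T_{(\bq,\bp)}$ from continuous local bases $S(\bq)$ of $\ker A(\bq)$ and $P(\bq)$ of $\im A(\bq)^\top$ — in effect re-deriving the content of Lemma~\ref{lem:contker} and folding it, together with the relevant dimension count, into a single concrete construction; the Dirac property then follows from $\dim\Dc_{(\bq,\bp)}=N$ and nondegeneracy of the pairing rather than from the $KL^\top+LK^\top=0$ criterion. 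The paper's approach buys modularity: Proposition~\ref{prop:Dirnonhol} is stated once, reused for Proposition~\ref{prop:dirac1}, and its necessity is illuminated by the counterexample in Remark~\ref{rem:Dirnonhol}. Your approach buys self-containedness and an explicit $N$-column image representation of the constrained Dirac structure itself, and you correctly isolate the constant-rank hypothesis as the ingredient that makes the trivialization continuous — which is precisely the point the paper makes in Remark~\ref{rem:Dirnonhol}.
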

\begin{proof}
{\em Step~1:} We show the statement under the additional assumption that $\ell=0$ (i.e., there are no constraints).  Denote the vector space as in \eqref{eq:rigidDirac2}, with additionally $\ell=0$, by $\hat{\mathcal{D}}_{(\bq,\bp)}$.
Then an image representation of this space is given by
\begin{equation*}
\hat{\mathcal{D}}_{(\bq,\bp)}=\im\begin{bmatrix}
0&Z(\bq)&0&0\\
-Z(\bq)^\top&-G(\bp)&-I_{n_\kin}&-B(\bq)\\
0&I_{n_\kin}&0&0\\
0&B(\bq)^\top&0&0\\
I_{n_\pot}&0&0&0\\
0&I_{n_\kin}&0&0\\
0&0&I_{n_\kin}&0\\
0&0&0&I_m
\end{bmatrix} = \im \begin{bmatrix} J(\bq,\bp)\\ I_{\hat n}\end{bmatrix},
\end{equation*}
where $\hat n = n_\pot + 2 n_\kin + m$ and $J:U_\pos\times\R^{n_\kin}\to \R^{\hat n\times \hat n}$ is continuous and pointwise skew-symmetric. Fix $(\bq,\bp)\in U_\pos\times\R^{n_\kin}$. Let $(f_i,e_i)\in \hat{\mathcal{D}}_{(\bq,\bp)}$, $i=1,2$, then there exist $g_1,g_2\in\R^{\hat n}$ such that 
\[
    \begin{pmatrix} f_i\\ e_i\end{pmatrix} = \begin{pmatrix} J(\bq,\bp) g_i\\ g_i\end{pmatrix},\quad i=1,2.
\]
From this it follows that
\[
    f_2^\top e_1 + f_1^\top e_2 = \eta_2^\top J(\bq,\bp)^\top \eta_1 + \eta_1^\top J(\bq,\bp)^\top \eta_2 =0.
\]
On the other hand, if for any $f,e\in\R^{\hat n}$ we have that, for all $(J(\bq,\bp)g,g)\in \hat{\mathcal{D}}_{(\bq,\bp)}$,
\[
   0 =  f^\top g + g^\top J(\bq,\bp)^\top e = g^\top \big( f - J(\bq,\bp) e\big),
\]
then $f = J(\bq,\bp) e$ as $g\in\R^{\hat n}$ is arbitrary. This shows that $\hat{\mathcal{D}}_{(\bq,\bp)}$ is a Dirac structure. Property~(b) of Definition~\ref{def-DirMod} follows from continuity of $(\bq,\bp)\mapsto J(\bq,\bp)$ with the linear and bijective maps $T_{(\bq,\bp)}:= \left[\begin{smallmatrix}  J(\bq,\bp)\\ I_{\hat n} \end{smallmatrix}\right]$.
Therefore, $(\hat{\mathcal{D}}_{(\bq,\bp)})_{{(\bq,\bp)}\in U_\pos\times\R^{n_\kin}}$ is a~modulated Dirac structure.

{\em Step~2:} We prove the general statement for $\ell>0$. Since $A$ has constant rank on $U_\pos$, we find that
\[E:U_\pos\times\R^{n_\kin}\to\R^{\ell\times(n_\pot+2n_\kin+m)},\ (\bq,\bp)\mapsto [0_{\ell\times n_\pot},A(\bq),0_{\ell\times (n_\kin+m)}]\]
has constant rank on $U_\pos\times\R^{n_\kin}$. Further, the structure of 
$\hat{\mathcal{D}}_{(\bq,\bp)}$ yields
\begin{multline*}
    \forall\,(\bq,\bp)\in U_{\pos}\times \R^{n_\kin}:\\\dim \Big(\hat{\mathcal{D}}_{(\bq,\bp)}\cap\big(\R^{n_\pot+2n_\kin+m}\times\ker E(\bq,\bp)\big)\Big)=n_\pot+2n_\kin+m-\rank A(\bq),
\end{multline*}
which is constant. As a~consequence, the assumptions of Proposition~\ref{prop:Dirnonhol} are fulfilled, and we can conclude that $(\Dc_{(\bq,\bp)})_{(\bq,\bp)\in U_{\pos}\times\R^{n_\kin}}$ is a~modulated Dirac structure.\hfill
\end{proof}

\begin{remark}
    Recall that in Section~\ref{sec:rigidmks} we mentioned that Proposition~\ref{prop:dirac1} would be a consequence of Proposition~\ref{prop:dirac2}. Indeed, this is the case for $n_\kin=n_\pot$, $Z\equiv I_{n_\kin}$ and $G\equiv 0_{n_\kin\times n_\kin}$.
\end{remark}

As in the previous section, we consider resistive relations defined by a~function. That is, for $U_{\rm d} \subset U_{\pos} \times \R^n$, we consider a function $\bF_{\rm d}: U_{\rm d} \to \R^{n_\kin}$ with
\[\forall\,(\bq,\bv)\in U_{\rm d}:\quad \bv^\top \bF_{\rm d}(\bq,\bv)\geq0.\]
This defines a~modulated resistive relation $\Rc = (\mathcal{R}_{\bq})_{\bq\in U_{\pos}}$ via
\begin{equation*}
\mathcal{R}_{\bq}:=\setdef{\begin{pmatrix}\bF\\\bv\end{pmatrix}\in\R^{2{n_\kin}}}{\bF=\bF_{\rm d}(\bq,\bv)}.\label{eq:MKSresrel2}
\end{equation*}
Basically, we consider the same Lagrangian submanifold as in \eqref{eq:LagrMech0}. The only difference is that we might have different dimensions for the position and momentum. 
As before, let $c: U_{\pos} \to \R^k$ denote the position constraints in the form $c(\bq) = 0$, and let $M \in \R^{n_\kin \times n_\kin}$ be the mass matrix, which is assumed to be symmetric. We assume that~$c$ as well as $\mathcal{V}_\pot:U_\pos\to\R$  are twice continuously differentiable and that $c$ vanishes in a~non-empty subset of $U_\pos$, and $c'(\bq)$ has full row rank for all $\bq\in U_\pos$. 
 The same argumentation as in the previous section then yields that
\begin{equation*}
    \mathcal{L}:=\setdef{\!\!\begin{pmatrix}\bq\\\bp\\\bF\\\bv\end{pmatrix}\in\R^{2n_{\pot}+2n_{\kin}}\!}{\parbox{55mm}{$M\bv=\bp,\,c(\bq)=0,$\\[2mm]$\exists\,\bm{\lambda}\in\R^k:\ 
    \bF=\nabla \mathcal{V}_\pot(\bq)+c'(\bq)^\top\bm{\lambda}\!\!$}}\label{eq:LagrMech}
\end{equation*}
is a~Lagrangian submanifold.

Let us now derive the equations for the port-Hamiltonian system governed by the modulated Dirac structure, modulated resistive relation, and Lagrangian submanifold introduced thus far. This is given by 
\begin{align*}
\begin{pmatrix}\dot{\bq}(t)\\\dot{\bp}(t)\\\bv_{\Rc}(t)\\\bv_{\ext}(t)\\\bF(t)\\\bv(t)\\\bF_{\Rc}(t)\\\bF_{\ext}(t)\end{pmatrix}&\in\mathcal{D}_{(\bq(t),\bp(t))},\quad \begin{pmatrix}\bF_{\Rc}(t)\\\bv_{\Rc}(t)\end{pmatrix}\in\mathcal{R}_{\bq(t)},\quad
\begin{pmatrix}\bq(t)\\\bp(t)\\\bF(t)\\\bv(t)\end{pmatrix}\in\mathcal{L}.
\end{align*}
The first relation gives
\[\begin{aligned}
\dot{\bq}(t)&=Z({\bq}(t))\bv(t),\\[1mm]
\dot{\bp}(t)&=-Z({\bq}(t))^\top\bF(t)-\bF_{\Rc}(t)-G(\bp(t))\bv(t)\\
&\quad\,-B(\bq(t))\bF_{\ext}(t)-A(\bq(t))^\top \bm{\mu}(t),\\[1mm]
0&=A(\bq(t)){\bv}(t),\\[1mm]
\bv(t)&=\bv_{\Rc}(t),\\[1mm]
\bv_{\ext}(t)&=B(\bq(t))^\top {\bv}(t).
\end{aligned}\]
Then, by the relations from the Lagrangian submanifold and the modulated resistive relation, we have
\[\begin{aligned}
M\bv(t)&=\bp(t),\\
\bF(t)&=\nabla \mathcal{V}_\pot(\bq(t))+c'(\bq(t))^\top\bm{\lambda}(t),\\
0&=c(\bq(t)),\\
\bF_{\Rc}(t)&=\bF_{\rm d}(\bq(t),{\bv_{\Rc}(t)}).
\end{aligned}
\]
Overall, this leads to the differential-algebraic system
\begin{equation}
\begin{aligned}
\dot{\bq}(t)&=Z({\bq}(t))\bv(t),\\[1mm]
 M\dot{\bv}(t)&=-Z({\bq}(t))^\top\nabla \mathcal{V}_\pot(\bq(t))-Z({\bq}(t))^\top c'(\bq(t))^\top\bm{\lambda}(t)
-\bF_{\rm d}(\bq(t),{\bv(t)})\\
&\quad\,-G(M\bv(t))\bv(t)-A(\bq(t))^\top \bm{\mu}(t)-B(\bq(t))\bF_{\ext}(t),\\[1mm]
0&=c(\bq(t)),\\[1mm]
0&=A(\bq(t)){\bv}(t),\\[1mm]
\bv_{\ext}(t)&=B(\bq(t))^\top {\bv}(t).
\end{aligned}\label{eq:mks2}
\end{equation}
Let us consider the total energy of the system, which is the sum of the kinetic energy $\tfrac{1}{2} \bv^\top M\bv$ and the potential energy $\Vc_\pot(\bq)$. Along the solutions of \eqref{eq:mks2}, the total energy satisfies
    \begin{align*}
    &\phantom{=}\ddt\big(\tfrac12\bv(t)^\top M\bv(t)+\mathcal{V}_\pot(\bq(t))\big)\\[2mm]
&={\bv}(t)^\top \ddt \big(M\bv(t)\big)+\dot{\bq}(t)^\top\nabla\mathcal{V}_\pot(\bq(t))\\
&={\bv}(t)^\top \Big(\ddt \big(M\bv(t)\big)+ Z({\bq}(t))^\top\nabla\mathcal{V}_\pot(\bq(t))\Big)\\
&\stackrel{\eqref{eq:mks2}}{=}- {\bv}(t)^\top \Big(Z({\bq}(t))^\top c'(\bq(t))^\top\bm{\lambda}(t)
+\bF_{\rm d}(\bq(t),{\bv(t)})
+G(M\bv(t))\bv(t)\\
&\qquad\qquad+A(\bq(t))^\top \bm{\mu}(t)+B(\bq(t))\bF_{\ext}(t)\Big).
\end{align*}
The velocity constraint gives $\bv(t)^\top A(\bq(t))^\top=0$,
skew-symmetry of $G(M\bv(t))$ implies $\bv(t)^\top G(M\bv(t))\bv(t)=0$. Moreover, 
\[\bv(t)^\top Z(\bq(t))^\top c'(\bq(t))^\top=\dot{\bq}(t)^\top c'(\bq(t))^\top=\ddt c(\bq(t))=0.\]
By further invoking $\bv_{\ext}(t)=B(\bq(t))^\top {\bv}(t)$, we see that the above energy balance reduces to 
    \begin{align*}
\ddt\big(\tfrac12\bv(t)^\top M\bv(t)+\mathcal{V}_\pot(\bq(t))\big)
=- {\bv}(t)^\top 
\bF_{\rm d}(\bq(t),{\bv(t)})
- {\bv}_{\ext}(t)^\top\bF_{\ext}(t),
\end{align*}
again with the interpretation that
$\bv_{\ext}(t)^\top \bF_{\ext}(t)$ is the power extracted from the system, whereas ${\bv}(t)^\top\bF_{\rm d}(\bq(t),{ \bv}(t))$ represents the dissipated power.

\section{Interconnection}\label{sec:interconn}

One of the biggest advantages of port-Hamiltonian models is their modular structure. That is, the class of port-Hamiltonian systems is closed under a~certain type of interconnection \cite{CvdSB07}. This type of interconnection is based on a~splitting of the external ports of two systems into those to be linked and those that are `truly external', that is
\[f_{\Pc i}(t)=\begin{pmatrix}
    f_{\Pc {\rm c},i}(t)\\f_{\Pc {\ext},i}(t)
\end{pmatrix},\, e_{\Pc i}(t)=\begin{pmatrix}
    e_{\Pc {\rm c},i}(t)\\e_{\Pc {\ext},i}(t)
\end{pmatrix}\in \R^{m_{\rm c}}\times \R^{m_{\ext,i}},\quad i=1,2.\]
The coupling relations are
\begin{equation}
f_{\Pc {\rm c},1}(t)=f_{\Pc {\rm c},2}(t),\quad e_{\Pc {\rm c},1}(t)=-e_{\Pc {\rm c},2}(t),\label{eq:couple}
\end{equation}
and the resulting external ports of the interconnected system are given by
\[f_{\Pc}(t)=\begin{pmatrix}
    f_{\Pc \ext,1}(t)\\f_{\Pc {\ext},2}(t)
\end{pmatrix},\, e_{\Pc}(t)=\begin{pmatrix}
    e_{\Pc {\ext},1}(t)\\e_{\Pc {\ext},2}(t)
\end{pmatrix}\in \R^{m_{\ext,1}}\times \R^{m_{\ext,2}}.\]
For completeness, note that \cite{CvdSB07} uses the interconnection rules $f_{\Pc {\rm c},1} = -f_{\Pc {\rm c},2}$ and $e_{\Pc {\rm c},1} = e_{\Pc {\rm c},2}$, which are equivalent to ours. However, we prefer the coupling relations in \eqref{eq:couple}, as they are more natural in the context of the class of multibody systems studied in this article.

For constant (i.e., unmodulated) Dirac structures, the coupling \eqref{eq:couple} again results in a Dirac structure, as shown in \cite{CvdSB07}.
For modulated Dirac structures, it remains unknown whether this type of coupling still results in a modulated Dirac structure, as it is unclear whether the local trivialization property holds for the interconnected system. In \cite{JaYo11,Jacobs2010}, additional conditions on the involved modulated Dirac structures have been imposed to ensure that the interconnection preserves the modulated Dirac structure property.

Our situation is somewhat more specific, as we focus on a subclass of port-Hamiltonian systems~-- namely, multibody systems as described in Section~\ref{sec:rigidmks2}. We demonstrate that, under a~rather mild additional assumption on the matrices~$A$ and~$B$, this class remains closed under interconnection via~\eqref{eq:couple}.

More precisely, we consider two multibody systems of type \eqref{eq:mks2}, where all involved matrices, functions, and variables are indexed by $i = 1,2$, depending on the respective system. We further partition the matrices $B_i$ by
\[B_i(\bq_i)=[B_{{\rm c},i}(\bq_i),B_{{\ext},i}(\bq_i)],\quad \;B_{{\rm c},i}\in \R^{n_{\kin,i}\times m_{\rm c}},\;B_{{\ext},i}\in \R^{n_{\kin,i}\times m_{\ext,i}},\quad i=1,2,\]
and the external forces and velocities by
\begin{multline*}
\bF_{\ext,i}(t)=\begin{pmatrix}\bF_{{\rm c},i}(t)\\\bF_{\widetilde{\ext},i}(t)\end{pmatrix},\,\bv_{\ext,i}(t)=\begin{pmatrix}\bv_{{\rm c},i}(t)\\\bv_{\widetilde{\ext},i}(t)\end{pmatrix},\\
\bF_{{\rm c},i}(t),\bv_{{\rm c},i}(t)\in \R^{m_{\rm c}},\; \bF_{\widetilde{\ext},i}(t),\bv_{\widetilde{\ext},i}(t)\in \R^{m_{\ext,i}}.
\end{multline*}
We show that under the assumption that 
\begin{equation}\label{eq:cond-interconn}
    \rank \begin{bmatrix}A_1(\bq_1)&0\\0&A_2(\bq_2)\\
B_{{\rm c},1}(\bq_1)^\top&- B_{{\rm c},2}(\bq_2)^\top\end{bmatrix}\equiv \text{const}
\end{equation}
on $U_{\pos,1}\times U_{\pos,2}$, the coupling
\[
\bF_{{\rm c},1}(t) = -\bF_{{\rm c},2}(t), \quad \bv_{{\rm c},1}(t) = \bv_{{\rm c},2}(t),
\]
which is induced by condition \eqref{eq:couple}, leads to an interconnected system which is again port-Hamiltonian. Note that the coupling corresponds to a connection between the two systems. More precisely, the velocities {of the connected ports are equal}, while the force on one side of the link acts as the counterforce on the other side. Altogether, this gives rise to the system


\begin{align*}
\dot{\bq}_1(t)&=Z_1({\bq}_1(t))\bv_1(t),\\[1mm]
\dot{\bq}_2(t)&=Z_2({\bq}_2(t))\bv_2(t),\\[1mm]
 M_1\dot{\bv}_1(t)&=-Z_1({\bq_1}(t))^\top\nabla \mathcal{V}_{\pot,1}(\bq_1(t))\\&\quad\,-Z_1({\bq}_1(t))^\top c_1'(\bq_1(t))^\top\bm{\lambda}_1(t)
-\bF_{{\rm d},1}(\bq_1(t),{\bv_1(t)})
\\&\quad\,-G_1(M_1\bv_1(t))\bv_1(t)-A_1(\bq_1(t))^\top \bm{\mu}_1(t)\\&\quad\,-B_{{\rm c},1}(\bq_1(t))\bF_{{\rm c},1}(t)- B_{\ext,1}(\bq_1(t))\bF_{\widetilde{\ext},1}(t),\\[1mm]
 M_2\dot{\bv}_2(t)&=-Z_2({\bq_2}(t))^\top\nabla \mathcal{V}_{\pot,2}(\bq_2(t))\\&\quad\,-Z_2({\bq}_2(t))^\top c_2'(\bq_2(t))^\top\bm{\lambda}_2(t)
+\bF_{{\rm d},2}(\bq_2(t),{\bv_2(t)})
\\&\quad\,-G_2(M_1(\bq_2(t))\bv_2(t))\bv_2(t)-A_2(\bq_2(t))^\top \bm{\mu}_2(t)\\&\quad\,+B_{{\rm c},2}(\bq_2(t))\bF_{{\rm c},1}(t)- B_{\ext,2}(\bq_2(t))\bF_{\widetilde{\ext},2}(t),\\[1mm]
0&=c_1(\bq_1(t)),\\[1mm]
0&=c_2(\bq_2(t)),\\[1mm]
0&=A_1(\bq_1(t)){\bv}_1(t),\\[1mm]
0&=A_2(\bq_2(t)){\bv}_2(t),\\[1mm]
0&=B_{{\rm c},1}(\bq_1(t))^\top{\bv}_1(t)-B_{{\rm c},2}(\bq_2(t))^\top{\bv}_2(t),\\[1mm]
\bv_{\widetilde{\ext},1}(t)&=B_{\ext,1}(\bq_1(t))^\top {\bv}_1(t),\\[1mm]
\bv_{\widetilde{\ext},2}(t)&=B_{\ext,2}(\bq_2(t))^\top {\bv}_2(t).
\end{align*}
It may appear quite complex at first glance, yet the system has an inherent structure. Specifically, it is of the form of \eqref{eq:mks2} with
\begin{align*}
n_\pot&= n_{\pot,1}+n_{\pot,2},\quad n_\kin= n_{\kin,1}+n_{\kin,2}, \quad m=m_{\ext,1}+m_{\ext,2},\\
\bq(t)&=\begin{pmatrix}
{\bq}_1\\
{\bq}_2
\end{pmatrix}, \;
\bv=\begin{pmatrix}
{\bv}_1\\
{\bv}_2
\end{pmatrix}, \;
\bp=\begin{pmatrix}
{\bp}_1\\
{\bp}_2
\end{pmatrix}, \\
\bv_{\ext}&=\begin{pmatrix}
\bv_{\widetilde{\ext},1}\\
\bv_{\widetilde{\ext},2}
\end{pmatrix}, \;\bF_{\ext}(t)=\begin{pmatrix}
\bF_{\widetilde{\ext},1}\\
\bF_{\widetilde{\ext},2}
\end{pmatrix},\\\mathcal{V}_\pot(\bq)&=\mathcal{V}_{\pot,1}(\bq_1)+\mathcal{V}_{\pot,2}(\bq_2),\\
Z(\bq)&=\begin{bmatrix}
Z_1({\bq}_1)&0\\0&Z_2({\bq}_2)
\end{bmatrix},\;\;M=\begin{bmatrix}
M_1&0\\0&M_2
\end{bmatrix},\\
G(\bp)&=\begin{bmatrix}
G_1({\bp}_1)&0\\0&G_2({\bp}_2)
\end{bmatrix},\;
\bF_{\rm d}(\bq,\bv)=\begin{pmatrix}\bF_{{\rm d},1}(\bq_1,\bv_1)\\\bF_{{\rm d},2}(\bq_2,\bv_2)\end{pmatrix},\\c(\bq)&=\begin{pmatrix}c_1(\bq_1)\\c_2(\bq_2)\end{pmatrix},\;
A(\bq)=\begin{bmatrix}A_1(\bq_1)&0\\0&A_2(\bq_2)\\
B_{{\rm c},1}(\bq_1)^\top&- B_{{\rm c},2}(\bq_2)^\top\end{bmatrix},\\
B(\bq)&=\begin{bmatrix}B_{\ext,1}(\bq_1) & 0 \\ 0 & B_{\ext,2}(\bq_2) \end{bmatrix}.
\end{align*}
This means that the interconnection rules cause for an additional velocity constraint $B_{{\rm c},1}(\bq_1(t))^\top{\bv}_1(t)-B_{{\rm c},2}(\bq_2(t))^\top {\bv}_2(t)=0$ and the corresponding Lagrange multiplier ${\bF}_{c,1}(t)$ also appears in the force balance.

By condition~\eqref{eq:cond-interconn} the matrix $A$ as above has constant 
rank on $U_{\pos,1}\times U_{\pos,2}$, hence it follows from the results in Section~\ref{sec:rigidmks2} that the interconnected system is again port-Hamiltonian. 

Finally, we note that the interconnection of more than two multibody systems can be reduced to the interconnection of two systems by an inductive approach.

\section{Examples}\label{Sec:Examples}

Here, we present three exemplary mechanical systems that are (partially) subject to constraints, incorporate gyroscopic effects, and include kinematic relationships. In the final example, we also illustrate the interconnection of mechanical systems in port-Hamiltonian form. {For a comprehensive explanation of the derivation of equations of motion for multibody systems, see~\cite{Woernle24}.}

\subsection{A differential drive robot in the plane}

\begin{figure}[t]
    \centering
    \def\svgwidth{0.7\textwidth}
    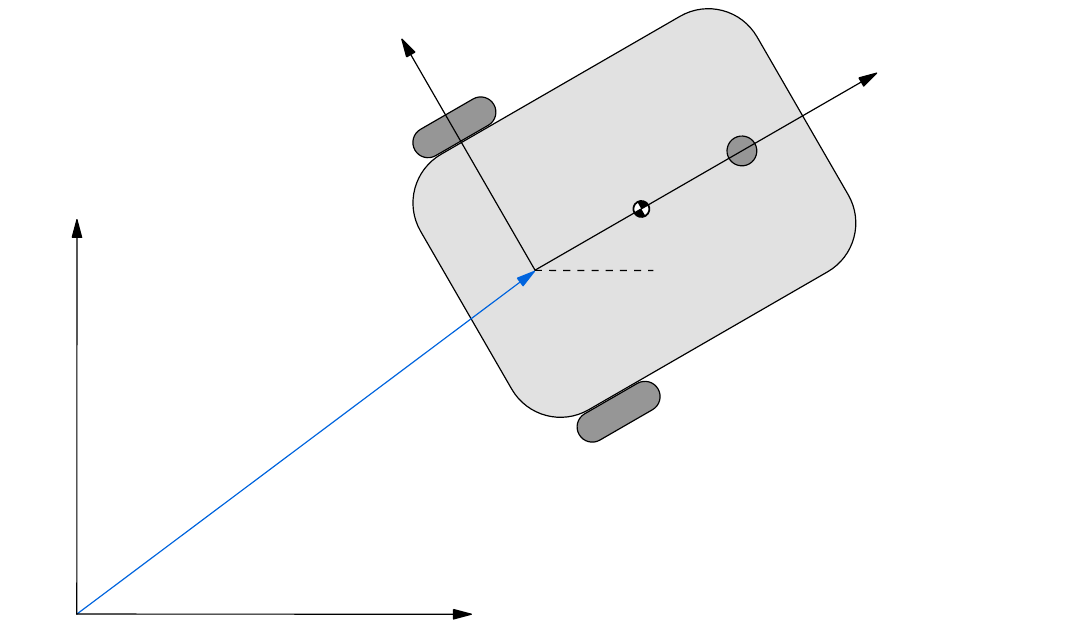
    \caption{Differential drive robot}
    \label{fig:DiffDriveRobot}
\end{figure}
Consider the differential drive robot depicted in Fig.~\ref{fig:DiffDriveRobot}.
Its body-fixed coordinate system is defined such that its $y'$-axis coincides with the axis of its wheels, while its $x'$-axis passes through its center of mass, whose $y'$-coordinate is $\ell\in\R$.
The robot is symmetric about its $x'$-axis and has a mass $m>0$ and a moment of inertia $\mathrm{I}_\mathrm{O}$ about the $z'$-axis at its origin. The identity
\begin{equation}
\mathrm{I}_\mathrm{O} = \mathrm{I}_\mathrm{S} + m\ell^2
\label{eq:momIS}\end{equation}
holds, where $\mathrm{I}_\mathrm{S}$ is the moment of inertia about the center of mass. We assume that $\mathrm{I}_\mathrm{S} > 0$, which means that the robot's mass is not entirely concentrated at the center of mass.
The system has three geometric degrees of freedom, described by the redundant coordinates
\[ \bq(t) = \begin{pmatrix} \bm{x}(t) \\ \bm{y}(t) \\ \bm{\varphi}(t) \end{pmatrix}, \] where $\bm{r}(t)=\Big(\begin{smallmatrix} \bm{x}(t) \\ \bm{y}(t) \end{smallmatrix}\Big)$ represents the position of the point $\rm O'$ in the inertial frame, and $\bm{\varphi}(t)$ denotes the orientation of the body-fixed frame relative to the inertial frame.
Moreover, the velocity vector of the robot at point $\rm O'$, expressed in the body-fixed frame, is defined as
\[ \bv(t) = \begin{pmatrix} \bm{v}_x(t) \\ \bm{v}_y(t) \\ \bm{\omega}(t) \end{pmatrix}. \]
The left and right wheels generate propulsion forces $\bm{F}_{\rm l}(t)$ and $\bm{F}_{\rm r}(t)$, respectively, acting in the $x'$-direction.
The wheels are spaced by a distance $b$.
As wheel slip is neglected, the robot cannot move along its $y'$-axis, leading to the velocity constraint $\bm{v}_y(t) = 0$. 
This constraint is enforced by the reaction force $\bm{\mu}(t) \in \R$.  
Furthermore, we introduce the wheel velocities as $\bm{v}_{\rm l}(t) = \bm{v}_x(t) - \tfrac{b}{2} \bm{\omega}(t)$ and  
$\bm{v}_{\rm r}(t) = \bm{v}_x(t) + \tfrac{b}{2} \bm{\omega}(t)$ of the left and right wheel, respectively.
The equations of motion in implicit form are then given by\begin{align*}
    \begin{pmatrix}
        \dot{\bm{x}}(t) \\ \dot{\bm{y}}(t) \\ \dot{\bm{\varphi}}(t)
    \end{pmatrix} &= \begin{bmatrix}
   \cos(\bm{\varphi}(t)) & -\sin(\bm{\varphi}(t)) & 0 \\        \sin(\bm{\varphi}(t)) & \cos(\bm{\varphi}(t)) & 0 \\
        0 & 0 & 1
    \end{bmatrix} \begin{pmatrix}
        \bm{v}_x(t) \\
        \bm{v}_y(t) \\
        \bm{\omega}(t)
    \end{pmatrix}, \\
    \begin{bmatrix}
        m & 0 & 0 \\
        0 & m & m\,\ell \\
        0 & m\,\ell & \mathrm{I}_{\rm O}
    \end{bmatrix} \begin{pmatrix}
        \dot{\bm{v}}_x(t) \\
        \dot{\bm{v}}_y(t) \\
        \dot{\bm{\omega}}(t)
    \end{pmatrix} &= {\begin{bmatrix}
    0 & 0 & m (\bm{v}_y+\ell\bm{\omega}) \\
    0 & 0 & -m \bm{v}_x \\
    -m (\bm{v}_y+\ell\bm{\omega}) & m \bm{v}_x & 0
\end{bmatrix}}
    \begin{pmatrix}
        \bm{v}_x(t) \\
        \bm{v}_y(t) \\
        \bm{\omega}(t)
    \end{pmatrix}\\&\quad\
    +\begin{bmatrix}
        0 \\ 1 \\ 0
    \end{bmatrix} \bm{\mu}(t) + \begin{bmatrix}
        1&1 \\
        0&0 \\
        -\frac{b}{2}&\frac{b}{2}
    \end{bmatrix} 
    \begin{pmatrix}
    \bm{F}_{\rm l}(t)\\\bm{F}_{\rm r}(t)\end{pmatrix},\\
    0 &= \begin{bmatrix}
        0 & 1 & 0
    \end{bmatrix} \begin{pmatrix}
        \bm{v}_x(t) \\
        \bm{v}_y(t) \\
        \bm{\omega}(t)
    \end{pmatrix},\\
    \begin{pmatrix}
        \bm{v}_{\rm l}(t)\\\bm{v}_{\rm r}(t)
    \end{pmatrix}
&=\begin{bmatrix}
        1&0&-\frac{b}{2} \\
        1&0&\phantom{-}\frac{b}{2}
    \end{bmatrix} 
\begin{pmatrix}
        \bm{v}_x(t) \\
        \bm{v}_y(t) \\
        \bm{\omega}(t)
    \end{pmatrix}.
\end{align*}
We neither have any position constraints nor damping, and the potential energy is constant. That is, we have a~system \eqref{eq:mks2} with $k=0$, $\mathcal{V}_\pot\equiv0$, $\bF_{\rm d}\equiv0$ and
\begin{align*}
&\bq=\begin{pmatrix}
        \bm{x} \\ \bm{y} \\ \bm{\varphi}
    \end{pmatrix},\;\;
\bm{Z}(\bq)=\begin{bmatrix}
   \cos(\bm{\varphi}) & -\sin(\bm{\varphi}) & 0 \\        \sin(\bm{\varphi}) & \cos(\bm{\varphi}) & 0 \\
        0 & 0 & 1
    \end{bmatrix},\;\; {M}=\begin{bmatrix}
        m & 0 & 0 \\
        0 & m & m\,\ell \\
        0 & m\,\ell & \mathrm{I}_{\rm O}
    \end{bmatrix},\;\; {B}(\bq)=\begin{bmatrix}
        1&1 \\
        0&0 \\
        -\frac{b}{2}&\frac{b}{2}
    \end{bmatrix}, \\ &{A}(\bq)=\begin{bmatrix}
        0 & 1 & 0
    \end{bmatrix}.\end{align*}
Using \eqref{eq:momIS} together with $m>0$ and $\mathrm{I}_\mathrm{S}>0$, we obtain that $\bm{M}$ is positive definite, and an inversion of $\bm{M}$ yields that the gyrator matrix can be written as
\[ \bm{G}(\bp)=m\frac{L-\ell \bm{p}_y}{\mathrm{I}_S}\begin{bmatrix}
     0&1&\ell\\
 -1&0&0\\-\ell&0&0
 \end{bmatrix},\quad\text{ where }\quad\bp=\begin{pmatrix}
    \bm{p}_x\\\bm{p}_y\\\bm{L}
 \end{pmatrix}.\]
Note that, in the above model, the velocity constraint can be resolved, which leads to the simplified system
\begin{align*}
    \begin{pmatrix}
        \dot{\bm{x}}(t) \\ \dot{\bm{y}}(t) \\ \dot{\bm{\varphi}}(t)
    \end{pmatrix} &= \begin{bmatrix}
        \cos(\bm{\varphi}(t)) & 0 \\
        \sin(\bm{\varphi}(t)) & 0 \\
        0 & 1
    \end{bmatrix} \begin{pmatrix}
        \bm{v}_x(t) \\ \bm{\omega}(t)
    \end{pmatrix}, \\
    \begin{bmatrix}
        m & 0 \\
        0 & \mathrm{I}_{\rm O}
    \end{bmatrix} \begin{pmatrix}
        \dot{\bm{v}}_x(t) \\ \dot{\bm{\omega}}(t)
    \end{pmatrix} &= \begin{bmatrix}
       0& m\bm{\omega}(t)\ell \\
        -m\bm{\omega}(t)\ell&0
    \end{bmatrix} \begin{pmatrix}
        \bm{v}_x(t) \\ \bm{\omega}(t)
    \end{pmatrix}+\begin{bmatrix}
        1&1 \\
        -\frac{b}{2}&\frac{b}{2}
    \end{bmatrix} 
    \begin{pmatrix}
    \bm{F}_{\rm l}(t)\\\bm{F}_{\rm r}(t)\end{pmatrix},\\
        \begin{pmatrix}
        \bm{v}_{\rm l}(t)\\\bm{v}_{\rm r}(t)
    \end{pmatrix}
&=\begin{bmatrix}
        1&-\frac{b}{2} \\
1&\phantom{-}\frac{b}{2}
    \end{bmatrix}
\begin{pmatrix}
        \bm{v}_x(t) \\
        \bm{\omega}(t)
    \end{pmatrix},
\end{align*}
which is again of the form \eqref{eq:mks2}, now without any explicitly stated velocity constraints.

\subsection{Gyroscope}

Consider the gimbal-mounted symmetrical gyroscope as depicted in Fig.~\ref{fig:MagnusKreisel}.
\begin{figure}[b]
    \centering
    \def\svgwidth{0.65\textwidth}
    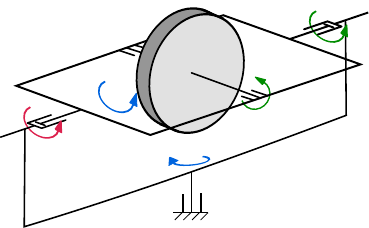
    \caption{Gyroscope}
    \label{fig:MagnusKreisel}
\end{figure}
The frames of the gimbal suspension are assumed to be massless.
The gyroscope's disk has a uniform mass density, with total mass $m > 0$, radius $r > 0$, and width $w > 0$.
The three axes of rotation intersect at the center of gravity $\mathrm{O}$ of the gyroscope.
Therefore, its inertia tensor is given by
\begin{equation}   \mathrm{I}_{\rm O} = \begin{bmatrix}
        \mathrm{I}_{{\rm O},x'} & & \\
        & \mathrm{I}_{{\rm O},y'} & \\
        & & \mathrm{I}_{{\rm O},z'} \\
    \end{bmatrix} = \frac{m}{12}\,\begin{bmatrix}
        6\,r^2 & & \\
        & 3\,r^2+w^2 & \\
        & & 3\,r^2+w^2 \\
    \end{bmatrix} .
\label{eq:intensor}
\end{equation}
We further assume that an external torque $\bm{M}_{\rm ext}(t)$ acts on the second axis {causing gyroscopic precession \cite{Arnold61}}. Referring to Fig.~\ref{fig:MagnusKreisel}, which depicts the Euler angles $\bm{\alpha}(t)$, $\bm{\beta}(t)$, and $\bm{\gamma}(t)$, we find that the angular velocities along the body-fixed axes $x'$, $y'$, and $z'$ are related to the time derivatives of the Euler angles by
\begin{subequations}
\begin{equation}
\begin{pmatrix}
    \dot{\bm{\alpha}}(t) \\
    \dot{\bm{\beta}}(t) \\
    \dot{\bm{\gamma}}(t)
\end{pmatrix}
=
\frac{1}{\cos(\bm{\beta}(t))}\!\!
\begin{bmatrix}
    \cos(\bm{\beta}(t)) & \sin(\bm{\alpha}(t))\,\sin(\bm{\beta}(t)) & \cos(\bm{\alpha}(t))\,\sin(\bm{\beta}(t)) \\
    0 & \cos(\bm{\alpha}(t))\,\cos(\bm{\beta}(t)) & -\sin(\bm{\alpha}(t))\,\cos(\bm{\beta}(t)) \\
    0 & \sin(\bm{\alpha}(t)) & \cos(\bm{\alpha}(t))
\end{bmatrix}\!\!
\begin{pmatrix}
    \bm{\omega}_{x'}(t) \\
    \bm{\omega}_{y'}(t) \\
    \bm{\omega}_{z'}(t)
\end{pmatrix}\!.\label{eq:gyrkin}
\end{equation}
Note that the singularity at $\bm{\beta} = \pm \frac\pi2$ describes the state in which gimbal lock appears~\cite{Arnold61}.
The kinetic equations of motion of the gyroscope read
\begin{multline}
     \begin{bmatrix}
        \mathrm{I}_{{\rm O},x'} & & \\
        & \mathrm{I}_{{\rm O},y'} & \\
        & & \mathrm{I}_{{\rm O},z'} \\
    \end{bmatrix} \begin{pmatrix}
        \dot{\bm{\omega}}_{x'}(t) \\ \dot{\bm{\omega}}_{y'}(t) \\ \dot{\bm{\omega}}_{z'}(t)
    \end{pmatrix} \\= \begin{bmatrix}
        0&-\mathrm{I}_{{\rm O},z'}\bm{\omega}_{z'}(t)&\phantom{-}\mathrm{I}_{{\rm O},y'}\bm{\omega}_{y'}(t)\\\phantom{-}
\mathrm{I}_{{\rm O},z'}\bm{\omega}_{z'}(t)&0&-\mathrm{I}_{{\rm O},x'}\bm{\omega}_{x'}(t)\\
-\mathrm{I}_{{\rm O},y'}\bm{\omega}_{y'}(t)&\phantom{-}\mathrm{I}_{{\rm O},x'}\bm{\omega}_{x'}(t)&0
    \end{bmatrix}
    \begin{pmatrix}
        {\bm{\omega}}_{x'}(t) \\ {\bm{\omega}}_{y'}(t) \\ {\bm{\omega}}_{z'}(t)
    \end{pmatrix}+  \begin{bmatrix}
        0 \\ \phantom{-}\cos(\bm{\alpha}(t)) \\ -\sin(\bm{\alpha}(t))
    \end{bmatrix}\bm{M}_{\rm ext}(t).
\end{multline}
By inverting the matrix in \eqref{eq:gyrkin}, we obtain that the angular velocity at the second axis is given by
\begin{equation}
\bm{\omega}_{\ext}(t)=
    \begin{bmatrix}
        0 &\cos(\bm{\alpha}(t)) & -\sin(\bm{\alpha}(t))
    \end{bmatrix}
    \begin{pmatrix}
        {\bm{\omega}}_{x'}(t) \\ {\bm{\omega}}_{y'}(t) \\ {\bm{\omega}}_{z'}(t)
\end{pmatrix}.\end{equation}
\label{eq:Gyr_alt}
\end{subequations}
Due to the mounting, the position of the center of gravity is fixed in space and the linear momentum equation vanishes. Taking this into account, the gyroscope can be written as a~system of the form \eqref{eq:mks2} with no constraints or damping, and with trivial potential energy (i.e., $\bF_{\rm d}\equiv 0$ and $\mathcal{V}_\pot\equiv0$). The redundant coordinates, velocities and external force are given by
\[\bq(t)=\begin{pmatrix}
    {\bm{\alpha}}(t) \\
    {\bm{\beta}}(t) \\
    {\bm{\gamma}}(t)
\end{pmatrix},\quad\bv(t)=\begin{pmatrix}
    {\bm{\omega}}_{x'}(t) \\
    {\bm{\omega}}_{y'}(t) \\
    {\bm{\omega}}_{z'}(t)
\end{pmatrix},\quad
\bF_{\ext}(t)=\bm{M}_{\ext}(t),\quad
\bv_{\ext}(t)=\bm{\omega}_{\ext}(t).\]
Further, the mass matrix is ${M}=\mathrm{I}_{\rm O}$ with $\mathrm{I}_{\rm O}\in\R^{3\times 3}$ as in \eqref{eq:intensor}, and, for $\bp=(\bm{L}_{x'},\bm{L}_{y'},\bm{L}_{z'})$,
the kinematic, gyrator and external force matrix, resp., read
\begin{align*}
    {Z}(\bq)&=\frac{1}{\cos(\bm{\beta})}
\begin{bmatrix}
    \cos(\bm{\beta}) & \sin(\bm{\alpha})\,\sin(\bm{\beta}) & \cos(\bm{\alpha})\,\sin(\bm{\beta}) \\
    0 & \cos(\bm{\alpha})\,\cos(\bm{\beta}) & -\sin(\bm{\alpha})\,\cos(\bm{\beta}) \\
    0 & \sin(\bm{\alpha}) & \cos(\bm{\alpha})
\end{bmatrix},\\
{G}(\bp)&=\begin{bmatrix}
        0&-\bm{L}_{z'}&\phantom{-}\bm{L}_{y'}\\\phantom{-}
\bm{L}_{z'}&0&-\bm{L}_{x'}\\
-\bm{L}_{y'}&\phantom{-}\bm{L}_{x'}&0
    \end{bmatrix},\quad 
{B}(\bq)=\begin{bmatrix}
        0 \\ \phantom{-}\cos(\bm{\alpha}) \\ -\sin(\bm{\alpha})
    \end{bmatrix}.
\end{align*}

\subsection{Slider Crank}
To illustrate port-Hamiltonian interconnection, we consider a planar slider-crank mechanism as shown in Fig.~\ref{fig:SliderCrank}. \begin{figure}[b]
    \centering
    \def\svgwidth{0.6\textwidth}
    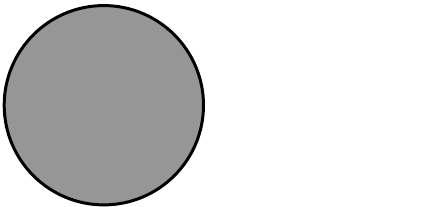
    \caption{Slider Crank}
    \label{fig:SliderCrank}
\end{figure}

The configuration consists of a crank (on the left), a slider and a connecting rod (on the right). For simplicity, the slider and the rod are modeled as one rigid body. 
We neither incorporate damping nor gravitational forces. The components are first modeled separately and then interconnected in the port-Hamiltonian framework. 


We describe the dynamics of the crank about the fixed point $\bm{A}$, around which it pivots. Consequently, there is no need to consider any translational coordinates. The crank's radius is $\ell_1$, and its orientation is described by the angle $\bm{\varphi}_1(t) \in \R$, and its angular velocity is denoted by $\bm{\omega}_1(t) \in \R$. The moment of inertia of the crank about point $\bm{A}$ is given by $\mathrm{I}_{1,\mathrm{A}}$. An external torque $\bm{M}_{\mathrm{ext}}(t) \in \R$ is applied at point $\bm{A}$ and forms an external port together with $\bm{\omega}_{\mathrm{ext}}(t) := \bm{\omega}_1(t)$. At the opposite end, the velocity $\bm{v}_{1,\mathrm{C}}(t) \in \R^2$ and the force $\bm{F}_{1,\mathrm{C}}(t) \in \R^2$ constitute another port, which is used to interconnect with the rod.
The dynamics are described by
\begin{align*}
\dot{\bm{\varphi}}_1(t)&=\bm{\omega}_1(t),\\
    \mathrm{I}_{1,\rm A}\,\dot{\bm{\omega}}_1(t) &= 
\bm{M}_{\ext}(t)+
\begin{bmatrix}
-\ell_1\,\sin(\bm{\varphi}_1(t)), & \ell_1\,\cos(\bm{\varphi}_1(t))
\end{bmatrix} \bm{F}_{1,\rm C}(t),\\
\bm{\omega}_{\ext}(t)&=\bm{\omega}_1(t),\\
\bm{v}_{1,\rm C}(t)&=\begin{bmatrix}
-\ell_1\,\sin(\bm{\varphi}_1(t)) \\ \phantom{-}\ell_1\,\cos(\bm{\varphi}_1(t))
\end{bmatrix} \bm{\omega}_1(t).
\end{align*}
The system is of the form \eqref{eq:mks2}, with mass matrix ${M} = \mathrm{I}_{1,\rm C}$ and kinematics matrix $\bm{Z}=1$. The motion does not have any constraints. Moreover, the potential energy and the damping are trivial, and $n_\kin=n_\pot=1$. Furthermore, the gyrator matrix is identically zero.  
The external ports are divided into two parts: on the one hand, the pair $(M_{\ext}, \bm{\omega}_{\ext})$, and on the other hand, the pair $(\bm{F}_{1,\rm C}, \bm{v}_{1,\rm C})$.
The latter will be used for interconnection, while the former remains an external port after the interconnection.
Next, we consider the rod of length $\ell_2$ that is connected to the slider at point $B$. The combined mass of the rod and the slider is given by $m_2$ and its center of mass is defined by $r_{2}$. The rod has length $\ell_2$ and its moment of inertia about point B is denoted as $\mathrm{I}_{2,\rm B}$. The orientation of the rod is given by the angle $\bm{\varphi}_2(t) \in \R$, and the position of point~B is described by its coordinates  $\bm{r}_{\mathrm{B}}(t) = \big(\bm{x}_{\mathrm{B}}(t), \bm{y}_{\mathrm{B}}(t) \big)^\top  \in \R^2$. The angular velocity is denoted by $\bm{\omega}_2(t) \in \R$, and the linear velocity in the body-fixed frame is given by $\bm{v}_2(t) = \big(\bm{v}_{2,x'}(t), \bm{v}_{2,y'}(t)\big)^\top \in \R^2$.
An external force $\bm{F}_{\rm ext}(t)\in\R$ is acting horizontally on the slider and forms an external port with the velocity $\bm{v}_{\rm ext}(t)=\bm{v}_{2,x'}(t)$ of the slider. In point $C$, the interconnecting force $\bm{F}_{2,\rm C}(t)\in\R^2$ is acting on the rod. The corresponding velocity is denoted as $\bm{v}_{2,\rm C}(t)\in\R^2$. The dynamics are then given by
\begin{align*}
\begin{pmatrix}
   \dot{\bm{r}}_{\rm B}(t)\\\dot{\bm{\varphi}}_2(t) 
\end{pmatrix}&=\begin{bmatrix}
    \cos(\bm{\varphi}_2(t)) & -\sin(\bm{\varphi}_2(t)) & 0 \\
    \sin(\bm{\varphi}_2(t)) & \cos(\bm{\varphi}_2(t)) & 0 \\
    0 & 0 & 1
\end{bmatrix}\begin{pmatrix}
   \bm{v}_2(t)\\\bm{\omega}_2(t) 
\end{pmatrix},\\
\begin{bmatrix}
        m_2 & 0 & 0 \\
        0 & m_2 & m_2 r_{2} \\
        0 & m_2 r_{2} & \mathrm{I}_{2, \rm B}
    \end{bmatrix}\begin{pmatrix}
    \dot{\bm{v}}_2(t) \\ \dot{\bm{\omega}}_2(t)
\end{pmatrix}
&=\begin{bmatrix}
\phantom{-}\cos(\bm{\varphi}_2(t)) & \sin(\bm{\varphi}_2(t)) \\ -\sin(\bm{\varphi}_2(t)) & \cos(\bm{\varphi}_2(t)) \\ -\ell_2 \sin(\bm{\varphi}_2(t)) & \ell_2 \cos(\bm{\varphi}_2(t)) 
\end{bmatrix}
\bm{F}_{2,\rm C}(t)
\\&\hspace{-2cm} + {\small\begin{bmatrix}
    0 & 0 & m_2 (\bm{v}_{2,y'}(t)+r_2\bm{\omega}_2(t)) \\
    0 & 0 & -m_2 \bm{v}_{2,x'}(t) \\
    -m_2 (\bm{v}_{2,y'}(t)+r_2\bm{\omega}_2(t)) & m_2 \bm{v}_{2,x'}(t) & 0
\end{bmatrix}}\begin{pmatrix}
        \bm{v}_2(t) \\ \bm{\omega}_2(t)
    \end{pmatrix}
\\&\quad +\begin{bmatrix}
    \sin(\bm{\varphi}_2(t)) \\ \cos(\bm{\varphi}_2(t)) \\ 0
\end{bmatrix} \bm{\lambda}(t)
+\begin{bmatrix}
\phantom{-}\cos(\bm{\varphi}_2(t)) \\ -\sin(\bm{\varphi}_2(t)) \\ 0
\end{bmatrix}
\bm{F}_{\ext}(t),\\
0&= \,\begin{bmatrix}0&1&0    
\end{bmatrix}\begin{pmatrix}
   {\bm{r}}_{\rm B}(t)\\{\bm{\varphi}}_2(t) 
\end{pmatrix},\\
\bm{v}_{2,\rm C}(t)&= \begin{bmatrix}
\cos(\bm{\varphi}_2(t)) & -\sin(\bm{\varphi}_2(t)) & -\ell_2 \sin(\bm{\varphi}_2(t)) \\ \sin(\bm{\varphi}_2(t)) & \phantom{-}\cos(\bm{\varphi}_2(t)) & \phantom{-}\ell_2 \cos(\bm{\varphi}_2(t))
\end{bmatrix}\begin{pmatrix}
   \bm{v}_2(t)\\\bm{\omega}_2(t) 
\end{pmatrix},\\
\bm{v}_{\ext}(t)&=\begin{bmatrix}
\cos(\bm{\varphi}_2(t)) & -\sin(\bm{\varphi}_2(t)) & 0
\end{bmatrix}\begin{pmatrix}
   \bm{v}_2(t)\\\bm{\omega}_2(t) 
\end{pmatrix}.
\end{align*}
It can be verified that this system is again of the 
form~\eqref{eq:mks2}, in particular with external port consisting of $\bm{F}_{\ext}(t),\bm{v}_{\ext}(t)$ and $\bm{F}_{2,\rm C}(t),\bm{v}_{2,\rm C}(t)$. 

To model the overall system, that is, the interconnection of rod and crank, the latter quantities can be coupled with the force-velocity pair of the crank via
\[\bm{F}_{1,\rm C}(t)=-\bm{F}_{2,\rm C}(t),\;\;\bm{v}_{1,\rm C}(t)=\bm{v}_{2,\rm C}(t).\]
Condition \eqref{eq:cond-interconn} is satisfied, since the constraint matrix of the combined system
\[
\bm{A}(\bq) = \begin{bmatrix}
    -\ell_1 \sin(\bm{\varphi}_1) & -\cos(\bm{\varphi}_2) &\phantom{-} \sin(\bm{\varphi}_2) & \phantom{-}\ell_2 \sin(\bm{\varphi}_2) \\
    \phantom{-}\ell_1 \cos(\bm{\varphi}_1) & -\sin(\bm{\varphi}_2) & -\cos(\bm{\varphi}_2) & -\ell_2 \cos(\bm{\varphi}_2)
\end{bmatrix}
\]
has always full row rank. 
Based on our findings in Section~\ref{sec:interconn}, the interconnection gives rise to a port-Hamiltonian system with external flows $\bm{\omega}_{\ext}(t)$ and $\bm{v}_{\ext}(t)$, and corresponding external efforts $\bm{M}_{\ext}(t)$ and $\bm{F}_{\ext}(t)$. We omit providing the equations of the overall system.

\begin{remark}
In the presented theory of interconnection for port-Hamiltonian systems, some velocities of the subsystems are used for interconnection.  
However, in the case of the slider-crank mechanism, an interconnection based on position level would be more appropriate.  
We note that coupling via velocities, together with a suitable choice of initial conditions, implicitly yields the desired interconnection on position level.

At the modeling level, interconnection based on  positions requires a modification of the standard port concept and calls for a theoretical framework involving the interconnection of Lagrangian submanifolds rather than Dirac structures.  
An appropriate adaptation of the port concept has already been proposed through the notion of `energy ports'~\cite{KrMavDS24}.
\end{remark}

\section{Conclusion}\label{Sec:Concl}

In this article, we have presented a port-Hamiltonian formulation of a class of multibody systems with rigid components.  
This includes, in particular, the formulation of suitable Dirac structures and Lagrangian submanifolds that allow for the incorporation of kinematic relations, position and velocity constraints, gyroscopic forces, and restoring forces arising from potential energy.  
Furthermore, we have shown that the port-Hamiltonian interconnection of such systems again yields a system of the same structural form.  
In this context, the interconnection relations behave like velocity constraints.  
Our theoretical developments are illustrated by three examples.

\begin{appendices}

\section{Auxiliary results on Dirac structures and Lagrangian submanifolds}\label{Sec:AppA}

We present some results for special modulated Dirac structures and Lagrangian submanifolds that play a key role in this work. 
The necessary theoretical results for the proofs in this appendix are drawn from existing literature. The findings presented here are applied in Sections~\ref{sec:rigidmks} and~\ref{sec:rigidmks2}, which focus on mechanical systems. 

 We begin with a preparatory lemma on local image representations of continuous matrix-valued functions with pointwise full row rank.
 \begin{lemma}\label{lem:contker}
 Let $U\subset\R^k$ be an open set, and let $E:U\to\R^{\ell\times n}$ be a continuous function. Assume that $E$ has constant rank $r\in\N_0$, i.e., $\rank E(x)=r$ for all $x\in U$. Then, for every $x\in U$, there exists a neighborhood $U_{x}\subset U$ of~$x$ and a~continuous function $J:U_{x}\to \R^{n\times(n-r)}$, such that $\im J(y)=\ker E(y)$ for all $y\in U_{x}$.
 \end{lemma}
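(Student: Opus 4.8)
The plan is to produce $J(y)$ explicitly by locally parametrizing $\ker E(y)$ through a fixed choice of $r$ independent columns of $E(x)$, and to obtain continuity from the standard fact that matrix inversion is continuous on the invertible matrices. First I would fix $x\in U$ and exploit $\rank E(x)=r$ to select $r$ columns of $E(x)$ that are linearly independent. Let $\Pi\in\R^{n\times n}$ be the permutation matrix that reorders the columns so that these come first, and write $E(y)\Pi=[\,C_1(y)\mid C_2(y)\,]$ with $C_1(y)\in\R^{\ell\times r}$ and $C_2(y)\in\R^{\ell\times(n-r)}$; both $C_1$ and $C_2$ are continuous as column selections of the continuous map $y\mapsto E(y)\Pi$. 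Since $\ker E(y)=\Pi\,\ker\big(E(y)\Pi\big)$ (because $\Pi$ is orthogonal), it suffices to parametrize $\ker[\,C_1(y)\mid C_2(y)\,]$ continuously and then apply $\Pi$.

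Next I would shrink the neighborhood. By construction $C_1(x)$ has full column rank $r$, so it contains a nonsingular $r\times r$ submatrix, say $S\,C_1(x)$ for a suitable row-selection matrix $S$. Continuity of the determinant yields a neighborhood $U_x\subset U$ of $x$ on which $\det\big(S\,C_1(y)\big)\neq 0$; hence $C_1(y)$ has full column rank, and in particular $C_1(y)^\top C_1(y)\in\R^{r\times r}$ is invertible, for all $y\in U_x$. Because $\rank E(y)=r=\rank C_1(y)$ throughout $U_x$, every column of $C_2(y)$ lies in $\im C_1(y)$, so there is a unique $R(y)\in\R^{r\times(n-r)}$ with $C_2(y)=C_1(y)R(y)$.

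Solving $[\,C_1(y)\mid C_2(y)\,]\sbvek{v_1}{v_2}=0$ then gives $C_1(y)\big(v_1+R(y)v_2\big)=0$, and since $C_1(y)$ has trivial kernel this forces $v_1=-R(y)v_2$. Thus
\[
\ker[\,C_1(y)\mid C_2(y)\,]=\im\sbvek{-R(y)}{I_{n-r}},
\qquad\text{so that}\qquad
J(y):=\Pi\,\sbvek{-R(y)}{I_{n-r}}
\]
satisfies $\im J(y)=\ker E(y)$ and has full column rank $n-r$. It then remains only to check that $J$ is continuous, which reduces to continuity of $R$. Here the main (and essentially only) obstacle is producing $R(y)$ in a manifestly continuous closed form: I would use the normal-equation expression $R(y)=\big(C_1(y)^\top C_1(y)\big)^{-1}C_1(y)^\top C_2(y)$, valid on $U_x$ by the previous step, which indeed returns the exact factor $R(y)$ precisely because $C_2(y)\in\im C_1(y)$. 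Continuity of $R$ then follows from continuity of $C_1,C_2$ together with the continuity of $A\mapsto A^{-1}$ on invertible matrices, whence $J$ is continuous. I expect the constant-rank hypothesis to enter at exactly two points — guaranteeing both $\dim\ker E(y)\equiv n-r$ and $\im C_2(y)\subseteq\im C_1(y)$ on $U_x$ — with everything else being routine.
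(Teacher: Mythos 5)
Your proposal is correct and follows essentially the same route as the paper's proof: select $r$ independent columns, permute them to the front, shrink the neighborhood so the selected block keeps full rank, write the remaining columns as $C_1(y)R(y)$ using constancy of the rank, and take $J(y)=\Pi\left[\begin{smallmatrix}-R(y)\\ I_{n-r}\end{smallmatrix}\right]$. The only cosmetic difference is that the paper additionally permutes rows to make an $r\times r$ leading block $E_{11}(y)$ invertible and sets $R(y)$-free $J(y)=\left[\begin{smallmatrix}-E_{11}(y)^{-1}E_{12}(y)\\ I_{n-r}\end{smallmatrix}\right]$, whereas you obtain continuity of $R(y)$ via the normal equations $R(y)=(C_1(y)^\top C_1(y))^{-1}C_1(y)^\top C_2(y)$; both devices are routine and equivalent.
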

 \begin{proof}
Fix $x\in U$. Without loss of generality, we assume that $E(x)$ admits a~partition 
\[E(y)=\begin{bmatrix}E_{11}(y)&E_{12}(y)\\E_{21}(y)&E_{22}(y)\end{bmatrix},\quad y\in U,\]
such that $E_{11}(x)\in\R^{r\times r}$ is invertible  (otherwise, rearrange the rows and columns of $E(x)$ accordingly). Since $E$ is continuous and $E_{11}(x)$ is invertibile, there exists a neighborhood $U_{x}\subset U$ of $x$ such that $E_{11}(y)$ is invertible for all $y\in U_{x}$. As a consequence, since $\rank E(y) = r$ for all $y\in U_x$, there exists $R:U_x\to\R^{(\ell-r)\times r}$ such that $[E_{21}(y), E_{22}(y)] = R(y)[E_{11}(y), E_{12}(y)]$ for all $y\in U_x$. Then the function $J:U_{x}\to \R^{n\times(n-r)}$ with
 \[J(y)=\begin{bmatrix}-E_{11}(y)^{-1}E_{12}(y)\\I_{n-r}\end{bmatrix}\]
 has the desired properties.
\end{proof}
 
Next we investigate a special modulated Dirac structure that is derived from another one by incorporating an additional type of constraint. This is used in our analysis of mechanical systems to account for velocity constraints.

\begin{proposition}\label{prop:Dirnonhol}
Let $U\subset \R^k$ be open and $(\mathcal D_x)_{x\in U}$ be a~modulated Dirac structure with $\mathcal D_x\subset\R^n\times \R^n$ for all $x\in U$. Further, let $E:U\to \R^{\ell\times n}$ be continuous and assume that the dimension of the space
\begin{equation}\mathcal{D}_x\cap\big(\R^n\times\ker E(x)\big)\label{eq:spacedimconst}\end{equation}
is independent of $x\in U$.
Then for
\[\widetilde{\mathcal D}_x:=\setdef{\big({f},{e}\big)\in\R^n\times \R^n}{E(x)e=0,\;\exists\, \mu\in\R^\ell:\ \big(f+E(x)^\top\mu,e\big)\in\mathcal{D}_x },\quad x\in U,\]
we have that $(\widetilde{\mathcal D}_x)_{x\in U}$ is a~modulated Dirac structure.
\end{proposition}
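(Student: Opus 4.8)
The plan is to verify the two defining properties of a modulated Dirac structure from Definition~\ref{def-DirMod} in turn: first that each $\widetilde{\mathcal D}_x$ is a Dirac structure (property~(a)), and then the local trivialisation (property~(b)). Throughout I would exploit the image representation furnished by property~(b) for the given structure $(\mathcal D_x)_{x\in U}$: locally around a fixed $x_0$ there is a continuous family $T_y=\left[\begin{smallmatrix}K(y)\\ L(y)\end{smallmatrix}\right]$ of linear bijections $\R^n\to\mathcal D_y$, so that $\mathcal D_y=\setdef{(K(y)g,L(y)g)}{g\in\R^n}$ with $\left[\begin{smallmatrix}K(y)\\ L(y)\end{smallmatrix}\right]$ of full column rank $n$ and continuous in $y$.

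For property~(a) I would fix $x$ and first check that $\widetilde{\mathcal D}_x$ is isotropic with respect to the pairing $((f_1,e_1),(f_2,e_2))\mapsto f_1^\top e_2+f_2^\top e_1$. Given two elements of $\widetilde{\mathcal D}_x$ with associated multipliers $\mu_1,\mu_2$, the pairs $(f_i+E(x)^\top\mu_i,e_i)$ lie in $\mathcal D_x$; applying isotropy of $\mathcal D_x$ and using $E(x)e_1=E(x)e_2=0$ to annihilate the multiplier terms yields $f_1^\top e_2+f_2^\top e_1=0$. Since a subspace of $\R^n\times\R^n$ is a Dirac structure exactly when it is isotropic and $n$-dimensional, it then remains to show $\dim\widetilde{\mathcal D}_x=n$. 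For this I would realise $\widetilde{\mathcal D}_x$ as the image of the linear map $\Psi$ sending $((f',e'),\mu)\in\big(\mathcal D_x\cap(\R^n\times\ker E(x))\big)\times\R^\ell$ to $(f'-E(x)^\top\mu,e')$, so that $\dim\widetilde{\mathcal D}_x=\dim\big(\mathcal D_x\cap(\R^n\times\ker E(x))\big)+\ell-\dim\ker\Psi$. The kernel of $\Psi$ is isomorphic to $\setdef{\mu\in\R^\ell}{(E(x)^\top\mu,0)\in\mathcal D_x}$, and the decisive point is the standard Dirac duality $\setdef{f}{(f,0)\in\mathcal D_x}=(\im L(x))^\perp=\ker L(x)^\top$, which identifies this space with $\ker\big((E(x)L(x))^\top\big)$. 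Combining $\dim\big(\mathcal D_x\cap(\R^n\times\ker E(x))\big)=n-\rank\big(E(x)L(x)\big)$ (via injectivity of $g\mapsto(K(x)g,L(x)g)$) with $\dim\ker\Psi=\ell-\rank\big(E(x)L(x)\big)$ gives $\dim\widetilde{\mathcal D}_x=n$ exactly, pointwise and, notably, without invoking the constant-dimension hypothesis.

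The constant-dimension hypothesis enters only in property~(b). The computation above shows that this assumption is equivalent to $\rank\big(E(y)L(y)\big)$ being locally constant. Since $y\mapsto E(y)L(y)$ is continuous of constant rank, Lemma~\ref{lem:contker} provides, near $x_0$, a continuous full-column-rank $N(y)$ with $\im N(y)=\ker\big(E(y)L(y)\big)$. Parametrising $\widetilde{\mathcal D}_y$ through $g\in\ker\big(E(y)L(y)\big)$ and $\mu\in\R^\ell$ then yields the continuous generating matrix $S(y)=\left[\begin{smallmatrix}K(y)N(y)&-E(y)^\top\\ L(y)N(y)&0\end{smallmatrix}\right]$ with $\im S(y)=\widetilde{\mathcal D}_y$ of constant dimension $n$. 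To extract an honest trivialisation I would, at $x_0$, select $n$ columns of $S(x_0)$ that are linearly independent; the corresponding submatrix $\widetilde T_y$ has a nonvanishing $n\times n$ minor at $x_0$, hence full column rank $n$ on a neighbourhood, and since $\im\widetilde T_y\subseteq\im S(y)$ with both $n$-dimensional, the maps $\widetilde T_y:\R^n\to\widetilde{\mathcal D}_y$ form the desired continuous family of linear bijections.

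I expect the dimension count in property~(a) to be the main obstacle: obtaining $\dim\widetilde{\mathcal D}_x=n$ on the nose requires careful bookkeeping of the multiplier space and, crucially, the duality identity $\setdef{f}{(f,0)\in\mathcal D_x}=\ker L(x)^\top$ to evaluate $\dim\ker\Psi$. Everything else -- the isotropy check and the continuous selection of columns for the trivialisation -- is routine once the reformulation of the hypothesis as constancy of $\rank\big(E(y)L(y)\big)$ is in hand.
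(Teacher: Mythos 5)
Your proof is correct, and it reaches the conclusion by a partly different route than the paper. For property~(a) the paper verifies the bi-orthogonality condition $\widetilde{\mathcal D}_x=\widetilde{\mathcal D}_x^{\bot\!\!\!\bot}$ directly: it takes an arbitrary $(f,e)$ in the orthogonal complement, splits $f=f_1+f_2$, $e=e_1+e_2$ along $\R^n=\im E(x)^\top\oplus\ker E(x)$, and deduces $e_1=0$, $(f_2,e)\in\mathcal D_x$ by hand. You instead check isotropy and then pin down $\dim\widetilde{\mathcal D}_x=n$ via rank--nullity for the parametrizing map $\Psi$, using the duality $\setdef{f}{(f,0)\in\mathcal D_x}=(\im L(x))^\perp=\ker L(x)^\top$ so that $\dim\ker\Psi=\ell-\rank\big(E(x)L(x)\big)$ cancels against $\dim\big(\mathcal D_x\cap(\R^n\times\ker E(x))\big)=n-\rank\big(E(x)L(x)\big)$; this is a valid alternative, resting on the standard fact (used but not stated in the paper) that an isotropic subspace of dimension $n$ is automatically maximally isotropic, by nondegeneracy of the pairing. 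Your observation that the constant-dimension hypothesis is not needed for~(a) matches the paper, whose Step~1 likewise avoids it and whose Remark~\ref{rem:Dirnonhol} shows it only matters for the trivialization. For property~(b) your argument coincides with the paper's up to the final step: both translate the hypothesis into constancy of $\rank\big(E(y)L(y)\big)$ and invoke Lemma~\ref{lem:contker} to obtain the continuous generator $S(y)$ (the paper's $D(y)$, with an irrelevant sign difference on the $E(y)^\top$ block); where the paper applies Lemma~\ref{lem:contker} twice more to construct $J_2,J_3$ and takes $D(y)J_3(y)$, you select $n$ independent columns at the base point and exploit a nonvanishing $n\times n$ minor, which is slightly more economical and equally rigorous. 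A small dividend of your dimension count is that $\dim\widetilde{\mathcal D}_y=n$, which both approaches need to conclude $\rank S(y)=n$, is obtained explicitly rather than inherited from the Dirac property established in Step~1.
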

\begin{proof}
Fix $x\in U$.\\
{\em Step~1:} We show that $\widetilde{\mathcal D}_x$ is a~Dirac structure. A~straighforward calculation yields that 
\[\forall\,(f,e),(\hat{f},\hat{e})\in \widetilde{\mathcal D}_x:\quad\hat{f}^\top e+f^\top\hat{e}=0.\]
Next assume that $(f,e)\in\R^n\times\R^n$ fulfills $\hat{f}^\top e+f^\top\hat{e}=0$ for all $(\hat{f},\hat{e})\in \widetilde{\mathcal D}_x$. 
We use the orthogonal decomposition $\R^n=\im E(x)^\top + \ker E(x)$ (that is, $(\ker E(x))^\perp = \im E(x)^\top$) to obtain a~decomposition $f=f_1+f_2$, $e=e_1+e_2$ with $f_1,e_1\in \im E(x)^\top$, $f_2,e_2\in \ker E(x)$.
Then, for all $(\hat{f}_1,\hat{e})\in\Dc_x$ with $E(x)\hat{e}=0$, $\hat{\mu}\in\R^\ell$, we have that $(\hat{f}_1+E(x)^\top\hat{\mu}, \hat{e})\in \widetilde{\mathcal D}_x$ and hence
\begin{multline*}
0=  (\hat{f}_1+E(x)^\top\hat{\mu})^\top e+f^\top\hat{e}
=\big(\hat{f}_1+E(x)^\top\hat{\mu})^\top (e_1+e_2)+(f_1+f_2)^\top\underbrace{\hat{e}}_{\in\ker E(x)}\quad\\
=\hat{f}_1^\top (e_1+e_2)
+\hat{\mu}^\top E(x) e_1+f_2^\top{\hat{e}}.\end{multline*}
This gives, by setting $(\hat{f}_1,\hat{e})=(0,0)$ and $\hat\mu=0$ one after the other, that $e_1\in\ker E(x)$ and
\[\forall\,(\hat{f}_1,\hat{e})\in\Dc_x:\quad 0=\hat{f}_1^\top(e_1+e_2)
+f_2^\top{\hat{e}}.
\]
Using $e_1\in\ker E(x)$ together with $e_1\in\im E(x)^\top$, we obtain that $e_1=0$, and thus
\[\forall\,(\hat{f}_1,\hat{e})\in\Dc_x:\quad 0=\hat{f}_1^\top e_2+f_2^\top{\hat{e}}.
\]
Since $\Dc_x$ is a~Dirac structure, this implies that $(f_2,e_2)\in\Dc_x$. Now $e_1=0$ gives $e=e_2$, whence $(f_2,e)\in\Dc_x$.
By $f_1\in \im E(x)^\top$ there exists $\mu\in\R^\ell$ such that $f_1=E(x)^\top\mu$, and thus
\[(f,e)=(f_2+f_1,e)=(f_2+E(x)^\top\mu,e)\in \widetilde{\Dc}_x.\]
{\em Step~2:} We show that there exists
a~neighborhood $U_x\subset U$ of $x$ and a~family $(T_y)_{y\in U_x}$ of linear and bijective mappings $T_y:\R^n\to \mathcal D_y$, such that, for all $z\in\R^n$, the mapping
$y\mapsto T_yz$
    is continuous from $U_x$ to $\R^n\times\R^n$.\\
The definition of a~modulated Dirac structure yields that there exists a neighborhood $U_{1,x}\subset U$ of $x$, and continuous $K,L:U_{1,x}\to \R^{n\times n}$, such that 
\[\forall\, y\in U_{1,x}:\quad \mathcal{D}_y=\im\left[\begin{smallmatrix}K(y)\\L(y)\end{smallmatrix}\right].\] 
Then
\begin{equation}
    \forall\, y\in U_{1,x}:\quad \mathcal{D}_y \cap \big(\R^n\times \ker E(y)\big) =     \left[\begin{smallmatrix}K(y)\\ L(y)\end{smallmatrix}\right] \ker E(y)L(y).
\label{eq:rkconst}
\end{equation}
Since $\dim \mathcal{D}_y=n$, the matrix $\left[\begin{smallmatrix}K(y)\\ L(y)\end{smallmatrix}\right]$ has full column rank. 
Hence, a~combination of \eqref{eq:rkconst} with the rank-nullity theorem yields that
\begin{align*}
\rank E(y)L(y)&=n-\dim\ker E(y)L(y)\\
&=n-\dim \left(\left[\begin{smallmatrix}K(y)\\ L(y)\end{smallmatrix}\right]\ker E(y)L(y)\right)=
n-\dim\Big(\mathcal{D}_y \cap \big(\R^n\times \ker E(y)\big)\Big),
\end{align*}
which is independent of $y$ by the assumption that the last term is constant. Consequently,  $E(y)L(y)$ has constant rank, which we denote by $r_1$. Then, by Lemma~\ref{lem:contker}, there exists a neighborhood $U_{2,x}\subset U_{1,x}$ of~$x$ and some continuous $J_1:U_{2,x}\to\R^{n\times (n-r_1)}$, such that $\im J_1(y)=\ker E(y)L(y)$ for all $y\in U_{2,x}$. The definition of $\widetilde{\Dc}_y$ yields that
\[\forall\,y\in U_{2,x}:\quad \widetilde{\Dc}_y=\im \underbrace{\left[\begin{smallmatrix}K(y)J_1(y)&E(y)^\top\\L(y)J_1(y)&0\end{smallmatrix}\right]}_{=:D(y)\in\R^{2n\times (r_1+\ell)}},\]
which can be seen as follows: If $(f,e)\in \widetilde{\Dc}_y$, then $e\in \ker E(y)$ and $(f+E(y)^\top \mu, e) \in \Dc_y$ for some $\mu\in\R^\ell$, that is
\[
    \begin{pmatrix} f+E(y)^\top \mu \\ e\end{pmatrix} \in \left[\begin{smallmatrix}K(y)\\ L(y)\end{smallmatrix}\right] \ker E(y)L(y)\quad\implies\quad \begin{pmatrix} f \\ e\end{pmatrix}\in \im \left[\begin{smallmatrix}K(y)J_1(y)&E(y)^\top\\L(y)J_1(y)&0\end{smallmatrix}\right].
\]
On the other hand, let $\left(\begin{smallmatrix} f \\ e\end{smallmatrix}\right)\in \im \left[\begin{smallmatrix}K(y)J_1(y)&E(y)^\top\\L(y)J_1(y)&0\end{smallmatrix}\right]$, then
\[
    \begin{pmatrix} f \\ e\end{pmatrix} = \begin{pmatrix} \tilde f \\ \tilde e\end{pmatrix} + \begin{bmatrix} E(y)^\top \\ 0 \end{bmatrix}\mu,\quad \begin{pmatrix} \tilde f \\ \tilde e\end{pmatrix}\in \mathcal{D}_y \cap \big(\R^n\times \ker E(y)\big),\ \mu\in\R^\ell,
\]
from which it follows that $e\in \ker E(y)$ and $(f-E(y)^\top \mu, e) \in \Dc_y$, thus $(f,e)\in \widetilde{\Dc}_y$.

Since $\dim \widetilde{\Dc}_y = n$, we have that $\rank D(y)=n$ for all $y\in U_{2,x}$. Consequently, again using Lemma~\ref{lem:contker}, there exists a~neighborhood $U_{3,x}\subset U_{2,x}$ of $x$, and some continuous $J_2:U_{3,x}\to \R^{(r_1+\ell)\times (r_1+\ell-n)}$, such that $\im J_2(y)=\ker D(y)$ for all $y\in U_{3,x}$.  Consequently, $J_2$ has pointwise full column rank, and another use of Lemma~\ref{lem:contker} yields that there exists a neighborhood  $U_{4,x}\subset U_{3,x}$ of $x$, and some continuous $J_3:U_{4,x}\to \R^{(r_1+\ell)\times n}$, such that $\im J_3(y)=\ker J_2(y)^\top$ for all $y\in U_{3,x}$. Using that $\im J_2(y)$ is the orthogonal complement of $\im J_3(y) = \ker J_2(y)^\top$, we have that $[J_2(y),J_3(y)]$ is an invertible matrix for all $y\in U_{4,x}$. Hence, for all $y\in U_{4,x}$,
\[\widetilde{\Dc}_y=\im D(y)=\im D(y)[J_2(y),J_3(y)]=\im D(y)J_3(y).\]
Since $D(y)J_3(y)\in\R^{2n\times n}$ has full column rank, the desired result holds for $U_x=U_{4,x}$ and $T_y:\R^n\to \mathcal D_y$ with $z\mapsto D(y)J_3(y)z$.
\end{proof}

\begin{remark}\label{rem:Dirnonhol}
In Proposition~\ref{prop:Dirnonhol}, the assumption that the rank of the space $\mathcal{D}_x \cap \big(\R^n \times \ker E(x)\big)$ is constant is essential for $(\widetilde{D}_x)_{x\in U}$ to be a modulated Dirac structure.  
As a counterexample, consider the (constant) Dirac structure  
\[
\mathcal{D} = \im \left[\begin{smallmatrix} 1 & 0 \\ 0 & 0 \\ 0 & 0 \\ 0 & 1 \end{smallmatrix}\right],
\]  
and the matrix-valued function $E:\R \to \R^{1\times 2},\ x\mapsto [1,\,x]$. Then the subspaces $(\widetilde{\Dc}_x)_{x\in\R}$, as constructed in Proposition~\ref{prop:Dirnonhol}, satisfy $\widetilde{\Dc}_0 = \Dc$ and  
\[
\forall\,x\in\R\setminus\{0\}:\quad \widetilde{\Dc}_x = \R^2 \times \{0\}.
\]  
In particular, for any neighborhood $U_0 \subset \R$ of zero and any family $(T_y)_{y\in U_0}$ of linear bijective mappings $T_y: \R^2 \to \widetilde{\Dc}_y$, we find that for $z\in\R^2$ with $T_0 z = (0,0,0,1)$, the mapping $y\mapsto T_y z$ is discontinuous at zero. Therefore, $(\widetilde{\Dc}_x)_{x\in\R}$ is not a modulated Dirac structure.

Indeed, the assumptions of Proposition~\ref{prop:Dirnonhol} do not hold in this case, since $\Dc \cap (\R^2 \times \ker E(0))$ is one-dimensional, whereas $\Dc \cap (\R^2 \times \ker E(x)) = \{0\}$ for all $x \in \R \setminus \{0\}$.

\end{remark}

It was mentioned in Section~\ref{sec:pHsys} that the graphs of gradient fields are Lagrangian submanifolds.
In the following, we present a generalization of this result, where a gradient field is considered alongside a certain type of restriction. This generalization is used in this article to appropriately formulate position constraints.
The following result can essentially be deduced from the findings in \cite[Sec.~4.2]{MvdS20} on so-called {\em Morse families}; see also \cite{BLLCGR19,CastrillonLopez2016}.  
However, in these references, the result is embedded within rather abstract differential geometric concepts.  
For this reason, we have chosen to present an elementary proof here.


\begin{proposition}\label{prop-gradient}
Let $U\subset\R^n$ be open, and let $\mathcal{H}:U\to\R$ and $d:U\to \R^k$, $k,n\in\N_0$, be twice continuously differentiable. Further assume that $d'(x)$ has full row rank for all $x\in U$, and the set $\setdef{x\in U}{d(x)=0}$ is non-empty. Then
\[\mathcal L\coloneqq\setdef{(x,\nabla \mathcal{H}(x)+d'(x)^\top\lambda)}{x\in U\text{ with }d(x)=0,\;\lambda\in\R^k}\subset \R^n\times\R^n\]
 is a~Lagrangian submanifold of $\R^n\times\R^n$.
\end{proposition}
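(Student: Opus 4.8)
The plan is to verify the two defining properties of a Lagrangian submanifold in turn: first that $\mathcal L$ is a submanifold of $\R^n\times\R^n$ of dimension $n$, and then that its tangent space at each point satisfies the symplectic self-orthogonality condition~\eqref{lag-cond}.

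For the submanifold property I would fix a point $(x_0,e_0)\in\mathcal L$, so that $e_0=\nabla\mathcal H(x_0)+d'(x_0)^\top\lambda_0$ for a (necessarily unique, since $d'(x_0)^\top$ is injective) $\lambda_0\in\R^k$. As $d$ is twice continuously differentiable, $d'$ is continuously differentiable of constant rank $k$, so Lemma~\ref{lem:contker} (applied to $E=d'$, its explicit formula inheriting the $C^1$-regularity of $d'$) provides a neighborhood $U_{x_0}$ and a continuously differentiable $Z:U_{x_0}\to\R^{n\times(n-k)}$ of full column rank with $\im Z(x)=\ker d'(x)$. I would then introduce the local defining map
\[
F(x,e)=\begin{pmatrix} d(x)\\ Z(x)^\top\big(e-\nabla\mathcal H(x)\big)\end{pmatrix}\in\R^{k}\times\R^{n-k}=\R^n .
\]
Since $\im d'(x)^\top=(\ker d'(x))^\perp=(\im Z(x))^\perp$, the condition $Z(x)^\top(e-\nabla\mathcal H(x))=0$ is exactly $e-\nabla\mathcal H(x)\in\im d'(x)^\top$, so $F^{-1}(0)$ coincides with $\mathcal L$ on this neighborhood. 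Differentiating yields the block form $F'(x_0,e_0)=\left[\begin{smallmatrix} d'(x_0)&0\\ \ast& Z(x_0)^\top\end{smallmatrix}\right]$, which is surjective because $d'(x_0)$ has full row rank $k$ and $Z(x_0)^\top$ has full row rank $n-k$ (solve the top block for $\delta x$ via $d'(x_0)$, then the bottom block for $\delta e$ via $Z(x_0)^\top$). Hence $\mathcal L$ is a submanifold of codimension $n$, i.e.\ of dimension $n$.

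For the tangent space I would differentiate curves in $\mathcal L$. Such a curve takes the form $t\mapsto(x(t),\nabla\mathcal H(x(t))+d'(x(t))^\top\lambda(t))$ with $d(x(t))\equiv0$. Differentiating the constraint forces $\dot x(0)\in\ker d'(x_0)=:K$, while differentiating the second component, using $d'(x(t))^\top\lambda_0=\sum_i\lambda_{0,i}\nabla d_i(x(t))$, gives velocity $S\,\dot x(0)+d'(x_0)^\top\dot\lambda(0)$ with the \emph{symmetric} matrix $S:=\nabla^2\mathcal H(x_0)+\sum_{i=1}^k\lambda_{0,i}\nabla^2 d_i(x_0)$. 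This shows every tangent vector lies in
\[
\setdef{(v,\,Sv+w)}{v\in K,\ w\in R},\qquad R:=\im d'(x_0)^\top=K^\perp,
\]
and since both this set and $T_{(x_0,e_0)}\mathcal L$ have dimension $n$ (the latter by the first step, the former because $(v,w)\mapsto(v,Sv+w)$ is injective), the inclusion is an equality.

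Finally I would check~\eqref{lag-cond}. For two tangent vectors $(v_1,Sv_1+w_1),(v_2,Sv_2+w_2)$ with $v_i\in K$, $w_i\in R$, the symplectic pairing is
\[
v_1^\top(Sv_2+w_2)-(Sv_1+w_1)^\top v_2=\big(v_1^\top Sv_2-v_2^\top Sv_1\big)+v_1^\top w_2-w_1^\top v_2=0,
\]
where the bracket vanishes by symmetry of $S$ and the remaining terms vanish because $v_i\in K$ while $w_i\in K^\perp$. Thus $T_{(x_0,e_0)}\mathcal L$ is isotropic of dimension $n$, hence equals its own symplectic orthogonal complement, which is precisely the biconditional~\eqref{lag-cond}. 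The main obstacle I anticipate is the bookkeeping in the tangent-space computation: one must recognize that the constraint-force term $d'(x)^\top\lambda$ contributes the symmetric Hessian block $\sum_i\lambda_{0,i}\nabla^2 d_i(x_0)$ (this is exactly where twice continuous differentiability of $d$ is used), since its symmetry is what makes the pairing vanish.
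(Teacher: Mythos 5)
Your proof is correct, and it arrives at the same key object as the paper---the tangent space $T_{(x_0,e_0)}\mathcal L=\setdef{(v,\,Sv+w)}{v\in\ker d'(x_0),\ w\in\im d'(x_0)^\top}$ with the symmetric matrix $S=\nabla^2\mathcal H(x_0)+\sum_{i}\lambda_{0,i}\nabla^2 d_i(x_0)$ (the paper's $M(z_1,z_2)$)---but by a genuinely different route. The paper parametrizes: it invokes the submersion theorem to obtain a chart $g_1$ of $\setdef{x\in U}{d(x)=0}$, builds from it an explicit chart $g$ of $\mathcal L$, and reads off $T_z\mathcal L=\im g'(0)$. You instead exhibit $\mathcal L$ locally as the zero set of the $C^1$ map $F(x,e)=\big(d(x),\,Z(x)^\top(e-\nabla\mathcal H(x))\big)$ with surjective derivative, which has the advantage of actually establishing the submanifold property that the paper only asserts (``It can be seen that $\mathcal L$ is a differentiable submanifold''), and you then recover the tangent space by differentiating curves together with a dimension count. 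For the final step the paper verifies both implications of~\eqref{lag-cond} directly, whereas you check only isotropy and invoke the standard fact that an $n$-dimensional isotropic subspace of $\R^n\times\R^n$ equals its own orthogonal complement with respect to the pairing; this is shorter but leans on nondegeneracy of that pairing, which the paper's two-sided computation avoids making explicit. Two minor points deserve a sentence each in a polished write-up: (i) the multiplier $\lambda(t)=\big(d'(x(t))d'(x(t))^\top\big)^{-1}d'(x(t))\big(e(t)-\nabla\mathcal H(x(t))\big)$ along a $C^1$ curve in $\mathcal L$ is indeed differentiable at $0$ (this is what licenses the term $d'(x_0)^\top\dot\lambda(0)$ in your tangent-vector formula), and (ii) Lemma~\ref{lem:contker} is stated only for continuous $E$, so your remark that its explicit construction yields a $C^1$ selection $Z$ when $E=d'$ is $C^1$ is needed and should be kept.
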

\begin{proof}
It can be seen that $\mathcal L$ is a~differentiable submanifold of $\R^n\times \R^n$. In the sequel we show that it is Lagrangian.\\ 
 \emph{Step 1}: We calculate $T_z \Lc$ for some fixed $z\in \Lc$. First observe that by the assumption that $d'$ has full row rank everywhere it is a submersion, cf.~\cite[Ch.~4]{Lee12}. Then the submersion theorem (see e.g.~\cite[Cor.~5.13]{Lee12}) implies that the non-empty set $M:= \setdef{x\in U}{d(x)=0}$ is a~submanifold with dimension $\dim M = n-k$. Partitioning $z=(z_1,z_2)$, then $z_1\in M$ and there exists a neighborhood $V\subseteq\R^{n-k}$ of zero and a chart $g_1:V\to U$ of~$M$ at~$z_1$, i.e., $g_1(0) = z_1$ and $\ker g_1'(0) = \{0\}$ as well as $\im g_1'(0) = T_{z_1} M$. Furthermore,
 since $d(g_1(v))=0$ for all $v\in V$, by differentiation we obtain that
 \[
     d'(g_1(v)) g_1'(v) = 0.
 \]
 Since $\dim \ker d'(x) = n-k$ for all $x\in U$ by assumption, $g_1(0) = z_1$ and $\ker g_1'(0) = \{0\}$ we find that
 \begin{equation}\label{eq:ker_g=im_psi1}
     \ker d'(z_1) = \im g_1'(0) = T_{z_1} M.
 \end{equation}
 Set $W:= V\times\R^k \subset \R^n$ and define
 \[
     g:W\to \R^n\times\R^n,\ (w_1,w_2)\mapsto \big(g_1(w_1),\nabla \mathcal{H}(g_1(w_1)) - d'(g_1(w_1))^\top (\hat z+w_2)\big),
 \]
 where
 \[
     \hat z := \big(d'(z_1) d'(z_1)^\top\big)^{-1} d'(z_1) \big(\nabla \mathcal{H}(z_1) - z_2\big)\in\R^k.
 \]
 Then $g$ is a chart of $\Lc$ at~$z$, hence
 \[
     T_z \Lc = \im g'(0).
 \]
 We have that
 \[
     g'(w_1,w_2) = \begin{bmatrix} g_1'(w_1)& 0\\
     \nabla^2 \mathcal{H}(g_1(w_1))g_1'(w_1) - D(g_1(w_1)) (\hat z+w_2)g_1'(w_1) & -d'(g_1(w_1))^\top \end{bmatrix} \in\R^{2n\times n},
 \]
 where $\nabla^2 \mathcal{H}$ is the Hessian of $\mathcal{H}$ and $D = ((d')^\top)':\R^n\to L(\R^k,\R^{n\times n})$, where the latter denotes the space of linear mappings from $\R^k$ to $\R^{n\times n}$. 
 Then we find that
 \[
      T_{(z_1,z_2)}\Lc = \setdef{(v_1,v_2)\in\R^n\times\R^n}{\begin{array}{l} v_1 \in \im g_1'(0), \\
      v_2 - \Big(\nabla^2 \mathcal{H}(z_1) - D(z_1) \hat z \Big)  v_1 \in \im d'(z_1)^\top
      \end{array}\!\!}.
 \]
 By the symmetry of second derivatives we have that
 \[
     (D(z_1) \hat z)_{ij} = \sum_{l=1}^k \frac{\partial^2 d_l}{\partial x_i \partial x_j}(z_1) {\hat z}_l = (D(z_1) \hat z)_{ji}
 \]
 for any $i,j=1,\ldots,n$, and hence $D(z_1) \hat z\in\R^{n\times n}$ is symmetric. Furthermore, the Hessian $\nabla^2 \mathcal{H}(z_1)$ is symmetric, hence $M(z_1,z_2) := \nabla^2 \mathcal{H}(z_1) - D(z_1) \hat z$ is symmetric. Finally, by~\eqref{eq:ker_g=im_psi1} we obtain that
 \[
     T_{(z_1,z_2)}\Lc =\setdef{(v_1,v_2)\in\R^n\times\R^n}{v_1\in \ker d'(z_1),\ v_2 - M(z_1,z_2) v_1 \in \left(\ker d'(z_1)\right)^\perp}.
 \]

\noindent\emph{Step 2}: Fix $z=(z_1,z_2)\in\Lc$ and $(v_1,v_2)\in\R^n\times\R^n$. We show that property~\eqref{lag-cond} is satisfied.

 $\Rightarrow$: Assume that $(v_1,v_2)\in T_z \Lc$ and let $(w_1,w_2)\in T_z \Lc$ be arbitrary. Let $x,y\in \left(\ker d'(z_1)\right)^\perp$ be such that
 \[
     v_2 - M(z_1,z_2) v_1 = x,\quad w_2 - M(z_1,z_2) w_1 = y
 \]
 and observe that $v_1^\top y =  w_1^\top x = 0$. Then we have
 \begin{align*}
      v_1^\top w_2 - w_1^\top v_2 &= v_1^\top M(z_1,z_2) w_1 +  v_1^\top y - w_1^\top M(z_1,z_2) v_1 - w_1^\top x \\
     &= v_1^\top M(z_1,z_2) w_1  - w_1^\top M(z_1,z_2) v_1 = 0,
 \end{align*}
 where the last equality follows from symmetry of $M(z_1,z_2)$.

 $\Leftarrow$: Let $w_1\in \ker d'(z_1)$ and $y\in \left(\ker d'(z_1)\right)^\perp$ be arbitrary. Then $(w_1, y +  M(z_1,z_2) w_1) \in T_z \Lc$ and hence we have
 \begin{align*}
    0&= v_1^\top w_2- w_1^\top v_2 \\
    &=  v_1^\top M(z_1,z_2) w_1 +  v_1^\top y - w_1^\top v_2 \\
     &= \big(M(z_1,z_2) v_1 -v_2\big)^\top  w_1  +  v_1^\top y.
 \end{align*}
 Since $w_1$ and $y$ are arbitrary we obtain that $M(z_1,z_2) v_1 -v_2\in\left(\ker d'(z_1)\right)^\perp$ and $v_1\in\ker d'(z_1)$, thus $(v_1,v_2)\in T_z \Lc$. This completes the proof.
 \end{proof}

\end{appendices}

\backmatter

\section*{Statements and Declarations}

\subsection*{Funding}
This work was supported by the Deutsche Forschungsgemeinschaft (DFG, German Research Foundation) through the project \emph{`Adaptive control of coupled rigid and flexible multibody systems with port-Hamiltonian structure'} (Project-ID~362536361).

\subsection*{Author Contributions}
All authors contributed equally to this work.

\subsection*{Competing Interests} The authors report no conflicts of interest.


\bibliography{pH-Multibody}

\end{document}